\newtheorem{definition}{Definition}
\newtheorem{theorem}{Theorem}[section]
\newtheorem{lemma}[theorem]{Lemma}
\newtheorem{remark}{Remark}
\newtheorem*{theorem*}{Theorem}
\newtheorem{corollary}[theorem]{Corollary}
\numberwithin{theorem}{section}
\numberwithin{definition}{section}
\title{A geometric dynamical system with relation to billiards}
\author{Samuel Everett}
\email{same@uchicago.edu}
\subjclass[2020]{37E99, 37C27, 51N20}
\keywords{Piecewise continuous map, contraction mapping, periodic orbits, billiards}
\begin{document}

\begin{abstract}
    We introduce a geometric dynamical system where iteration is defined as a cycling composition of different maps acting on a space composed of three or more lines in $\mathbb{R}^2$.  This system is motivated by the dynamics of iterated function systems, as well as billiards with modified reflection laws.  We provide conditions under which this dynamical system generates periodic orbits, and use this result to prove the existence of closed nonsmooth curves over $\mathbb{R}^2$ which satisfy particular structural constraints with respect to a space of intersecting lines in the plane.
\end{abstract}

\maketitle

\section{Introduction}

The theory of mathematical billiards in polygons concerns the uniform motion of a point mass (billiard) in a polygonal plane domain, with elastic reflections off the boundary according to the mirror law of reflection.  One may also consider billiards with modified reflection laws, so that the angle of reflection is some function of the angle of incidence (see e.g., \cites{arroyo, arroyo2, magno, magno2, gtroub} and the references therein), and tiling billiards, where trajectories refract through planar tilings (see \cites{davis, davis2}).
It is natural to ask whether there exists a periodic billiard trajectory.
However, this question quickly becomes difficult to answer, and is open in the case of polygonal billiards obeying the mirror law of reflection (see \emph{Problem 10} in \cite{gutkin}, and \cite{gutkint} for a survey), although intense study has yielded a great deal of deep theorems towards the problem \cites{masur, schwartz, galperin, troub, halb}.

The purpose of this paper is to study a relaxed version of this problem, where the reflection rule is highly irregular: trajectories reflect off or refract through lines as a function of the line they are incident to.  Physically, consider a cracked pane of glass in which various materials are allowed between the cracks, and a laser beam passes in from one side of the pane.
To study such a system and consider existence of periodic trajectories, we introduce a dynamical system analogous to an iterated function system (see \cite{hutch} for review) where the defining collection of contraction mappings are geometrically characterized over lines in the plane, and composed in a fixed, cycling order.  We prove Theorem \ref{thm1}, which asserts the existence of nonsmooth closed curves satisfying particular geometric constraints with respect to a space of lines on the plane.  In fact, such closed curves can coincide with periodic billiard trajectories with standard reflection laws.  We state this result after giving some notation.

Let $X_m \subset \mathbb{R}^2$ denote a union of $m \geq 3$ nonconcurrent lines in $\mathbb{R}^2$ with at least one line not parallel with the others.  Assign each line in $X_m$ a unique label $L_i$, $i \in \{1, 2, ..., m\}$.  Let $p_1, p_2, ..., p_n$, be a sequence of $n\geq m$ points in $X_m$ such that consecutive points, including $p_n$ and $p_1$, are distinct, and if $p_k \in L_i$, then $p_{k+1} \in L_j$, $i\not= j$ (with the convention that $p_{n+1} \equiv p_1$).  Join consecutive pairs of such points, including $p_n$ and $p_1$, with line segments to construct a \emph{closed curve} $\Gamma$ over $X_m$.  Traversal of a closed curve in a fixed direction allows for construction of an \textit{incidence angle sequence $\theta_1, \theta_2, ..., \theta_n$ with respect to a line sequence $L_{a_1}, L_{a_2}, ..., L_{a_n}$}, by taking the acute or right angle $\theta_i$ between each segment of the closed curve and a line with label $L_{a_i}$ it is incident to, with respect to the traversal direction.  Refer to Figure \ref{fig2} for visual demonstration.  

\begin{figure}[b]
    \centering
    \includegraphics[scale=.21]{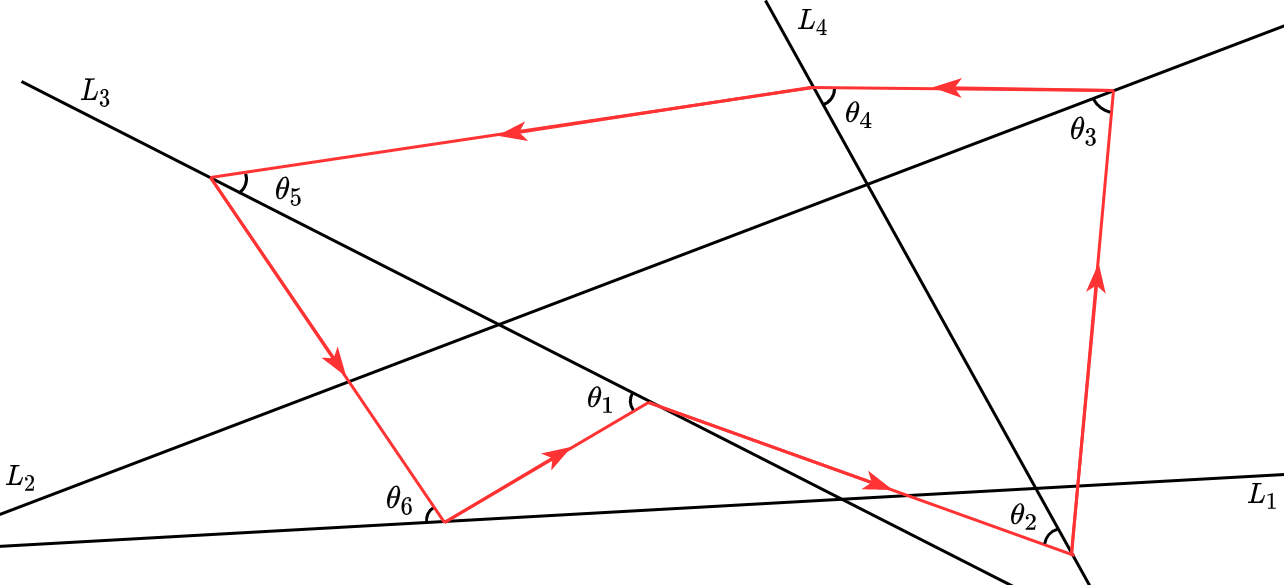}
    \caption{The values $\theta_1, \theta_2, \theta_3, \theta_4, \theta_5, \theta_6$  compose an incidence angle sequence with respect to the line sequence $L_3, L_4, L_2, L_4, L_3, L_1$.}
    \label{fig2}
\end{figure}

\begin{theorem}\label{thm1}
For any space $X_m$ with labeled lines, let $L_{a_1}, L_{a_2}, ..., L_{a_n}$, $n\geq m$, be a sequence of line labels such that no two consecutive labels are the same, including $L_{a_n}$ and $L_{a_1}$, and each of the $m$ labels occur at least once in the sequence.  Then for Lebesgue almost every $(\theta_1, \theta_2,...,\theta_n) \in (0, \pi/2]^n\subset \mathbb{R}^2$, there exists a closed curve $\Gamma$ over $X_m$ that admits an incidence angle sequence $\theta_1, \theta_2, ..., \theta_n$ with respect to the line sequence $L_{a_1}, L_{a_2}, ..., L_{a_n}$ when traversed in a fixed direction.
\end{theorem}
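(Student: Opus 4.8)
The plan is to realize the desired closed curve as a periodic orbit of the geometric dynamical system attached to the data $(\theta_i),(L_{a_i})$, so I would first set that system up. For each cyclic index $i$ there are exactly two lines through a given point meeting $L_{a_i}$ at angle $\theta_i$ (here one uses $0<\theta_i<\pi/2$), and each determines a partially defined map $f_i^{\epsilon}\colon L_{a_i}\to L_{a_{i+1}}$, $\epsilon\in\{+,-\}$, sending $p$ to the point where the corresponding line through $p$ meets $L_{a_{i+1}}$. Fixing an affine parametrization of each line of $X_m$ by $\mathbb{R}$, two facts are immediate. First, where defined, $f_i^{\epsilon}$ is an affine bijection with nonzero multiplier: as $p$ runs over $L_{a_i}$ the relevant line through $p$ sweeps a pencil of parallel lines genuinely transverse to $L_{a_i}$ (transverse because $\theta_i\ne 0$), so its intersection with $L_{a_{i+1}}$ moves affinely and nondegenerately; a direct computation gives the multiplier $c_i^{\epsilon}=\sin\theta_i/\sin(\theta_i-\epsilon\alpha_i)$, where $\alpha_i$ is the directed angle from $L_{a_i}$ to $L_{a_{i+1}}$. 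Second, $f_i^{\epsilon}$ fails to be defined precisely when this pencil is parallel to $L_{a_{i+1}}$, i.e.\ $\alpha_i\equiv\epsilon\theta_i\pmod{\pi}$, which since $\theta_i$ is acute can hold for at most one $\epsilon$; so at least one of $f_i^{+},f_i^{-}$ is always available.

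A closed curve with the prescribed incidence sequence is exactly a sign vector $\epsilon=(\epsilon_i)$ with every $f_i^{\epsilon_i}$ defined, together with a point $p_1\in L_{a_1}$ fixed by the composition $\Psi_\epsilon:=f_n^{\epsilon_n}\circ\cdots\circ f_1^{\epsilon_1}$: one then sets $p_{i+1}=f_i^{\epsilon_i}(p_i)$, and the incidence angle at step $i$ is automatically $\theta_i$ since the acute angle a segment makes with a line is unchanged when the segment is reversed. Now $\Psi_\epsilon$ is an affine self-map of $\mathbb{R}$ with multiplier $C_\epsilon=\prod_{i=1}^{n}c_i^{\epsilon_i}$, so it has a fixed point whenever $C_\epsilon\ne 1$, and it fixes the whole line when it is the identity. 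Hence it suffices to choose $\epsilon$ with $C_\epsilon\ne 1$, or, failing that, with $\Psi_\epsilon=\mathrm{id}$.

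For the sign choice: if some step $i$ has $L_{a_i}\not\parallel L_{a_{i+1}}$ and $\alpha_i\not\equiv\pm\theta_i\pmod{\pi}$, then both $f_i^{\pm}$ are defined with \emph{distinct} multipliers (one checks $c_i^{+}=c_i^{-}$ exactly when $\alpha_i\equiv 0$, i.e.\ the lines are parallel), so fixing the other signs arbitrarily and flipping $\epsilon_i$ yields two distinct values of $C_\epsilon$, at least one $\ne 1$; this finishes the argument. One is thus reduced to the case in which every step is a \emph{parallel step} ($\alpha_i\equiv 0$) or a \emph{forced step} ($\alpha_i\equiv\pm\theta_i$) — note in particular no consecutive pair is perpendicular. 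This is where the standing hypotheses enter: since some line $L$ of $X_m$ is neither parallel nor perpendicular to any other, and its label occurs so that $L=L_{a_k}$, the steps flanking position $k$ involve lines non-parallel and non-perpendicular to $L$; and because every label occurs, all of $X_m$ is in play, so nonconcurrency of $X_m$ prevents the lines in play from forming a pencil through a point (a configuration in which every $f_i^{\epsilon_i}$ would share a common fixed point, forcing all vertices to coincide). In the residual forced case every segment direction is determined — the segment at step $i$ is parallel to the reflection of $L_{a_{i+1}}$ in $L_{a_i}$ — and $C_\epsilon$ collapses to the fixed quantity $\prod_{\text{forced }i}1/(2\cos\theta_i)$; I would then argue, using the cyclic relation $\sum_i\alpha_i\equiv 0\pmod{\pi}$ (the line sequence returns to its start), that if this product equals $1$ then $\Psi_\epsilon$ is in fact the identity, so the whole line of basepoints is admissible.

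Finally, from a fixed point $p_1$ one builds $p_1,\dots,p_n$ as above; consecutive labels differ by hypothesis, so the only remaining point is that consecutive vertices be distinct, which can fail only if some $p_i$ is the crossing point $L_{a_i}\cap L_{a_{i+1}}$. When $\Psi_\epsilon=\mathrm{id}$ one simply avoids the finitely many bad basepoints along the admissible line; when the fixed point is unique, nonconcurrency guarantees it is not a point through which all the lines pass, and a short genericity argument (or a further flip of signs) removes the remaining bad cases. The step I expect to be the main obstacle is the residual forced case of the third paragraph: showing that once $C_\epsilon$ is pinned to $1$ the composition $\Psi_\epsilon$ must be the identity, which is precisely the point at which the three structural hypotheses on $X_m$ — nonconcurrency, the existence of a line neither parallel nor perpendicular to any other, and the occurrence of every label — have to be used together.
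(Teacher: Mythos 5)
There is a genuine gap, and it sits exactly where you flag it: the residual ``forced case.'' Nothing in the theorem's hypotheses prevents this case from occurring, because the $\theta_i$ are arbitrary acute angles chosen by an adversary: for instance one may take $\theta_i$ equal to the (acute) angle between $L_{a_i}$ and $L_{a_{i+1}}$ at every non-parallel step, so that every such step is forced and every parallel step has multiplier $1$. In that situation no sign flip is available that changes $C_\epsilon$, and if $C_\epsilon=1$ your argument needs $\Psi_\epsilon$ to be the identity rather than a nontrivial translation. You offer only the intention to prove this (``I would then argue\dots''), and it is not clear the structural hypotheses deliver it: flipping signs at parallel steps changes only the translation part of $\Psi_\epsilon$, and only by finitely many discrete amounts, so it cannot be tuned to zero; at forced steps there is nothing to flip. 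There is also a sign issue in your collapse $C_\epsilon=\prod 1/(2\cos\theta_i)$: the magnitude of a forced multiplier is $1/(2\cos\theta_i)$, but its sign depends on the configuration (e.g.\ for three lines pairwise at $60^\circ$ with all $\theta_i=60^\circ$ the composition has multiplier $-1$, not $+1$), and the sign is precisely what the ``$=1$'' dichotomy hinges on. As written, the proof is incomplete at this step, which you yourself identify as the main obstacle.

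The complication is an artifact of measuring the projection angle at the source line. The paper's rules take the angle $\theta_i$ at the \emph{target} line $L_{a_i}$: through any point there are two lines meeting $L_{a_i}$ at angle $\theta_i$, and both necessarily intersect $L_{a_i}$, so both orientations are everywhere-defined maps and no forced steps arise. The hypothesis that some line of $X_m$ is neither parallel nor perpendicular to any other is then used exactly once: when $C=1$, flip the orientation of the rule mapping \emph{onto} that line; since the relevant intersection angle $\delta\notin\{0,\pi/2\}$ and $\theta_i$ is acute, the flip changes that rule's similarity coefficient and hence moves $C$ off $1$. The cases $C<1$, collapsing $C=0$, and $C>1$ (via the inverse map, whose induced map has coefficient $1/C$) are then settled by the contraction mapping theorem, and degenerate curves passing through line intersection points are removed by a further orientation-flipping argument with the count $\binom{m}{2}<2^n$. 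Your final paragraph's treatment of vertex coincidences (``a short genericity argument or a further flip of signs'') is likewise only a sketch of what the paper makes explicit. If you re-anchor the angle at the target line, your forced case disappears and your affine-multiplier argument essentially becomes the paper's proof.
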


In the case where a closed curve is strictly contained within a polygon formed by the intersecting lines composing $X_m$, the closed curve does not cross over any lines in the space, so the angles of incidence implicitly define angles of reflection. Hence, when the parameters of the closed curve are such that the angles of incidence equal the angles of reflection, or the angles of reflection are a function of the angles of incidence, the closed curve corresponds to a periodic billiard trajectory obeying the mirror law of reflection or some modified reflection law.  However, all closed curves need not correspond to a periodic billiard trajectory.  

This paper is organized into two main parts, separated by study of two related dynamical systems.  In the first, from Sections \ref{sec:prelim} through \ref{sec:closed}, we define a dynamical system that provides controllable and predictable behavior, which we use to prove Theorem \ref{thm1}.  In the second part, from Sections \ref{sec:dist} through \ref{sec:periodic}, we redefine components of the dynamical system given in Section \ref{sec:prelim} in a way that introduces discontinuities.  The introduction of such discontinuities leads to far more complex dynamics that shares characteristics with piecewise isometries, the farthest point map on compact metric spaces, and generalizes \cite{everett}.  We prove a theorem that asserts orbits of this system are asymptotically stable when particular geometric conditions are satisfied.

\subsection*{Acknowledgements.}  The author would like to thank Nikhil Krishnan for the feedback in the early stages of this work.  The author is also grateful to the anonymous reviewer of this article, whose detailed feedback substantially improved the quality of this paper.

\section{Preliminaries}\label{sec:prelim}

Let $L_i, L_j$ label distinct lines in the space $X_m$.  Then, for every $x \in L_i$ we may determine two lines, $\mathscr{L}_0, \mathscr{L}_1$ such that $\{x\} = \mathscr{L}_0 \cap \mathscr{L}_1$ and $\mathscr{L}_0, \mathscr{L}_1$ intersect with line $L_j$ at an angle $\theta$ in $(0, \pi/2)$, with intersection points $z_0$ and $z_1$ in $L_j$, respectively.  For a visual demonstration, refer to Figure \ref{figRuleDef}.  We call $z_0, z_1$ the \textit{orientation 0 and 1, angle $\theta$ projections of $x$ onto $L_j$}.  If $\theta = \pi/2$, then we call the line intersection point $z$ the \textit{perpendicular projection of $x$ onto $L_j$}.  In the case when $x \in L_i \cap L_j$, the projection of $x$ onto $L_i$ or $L_j$ is simply $x$ itself.

\begin{figure}
    \centering
    \includegraphics[scale=0.25]{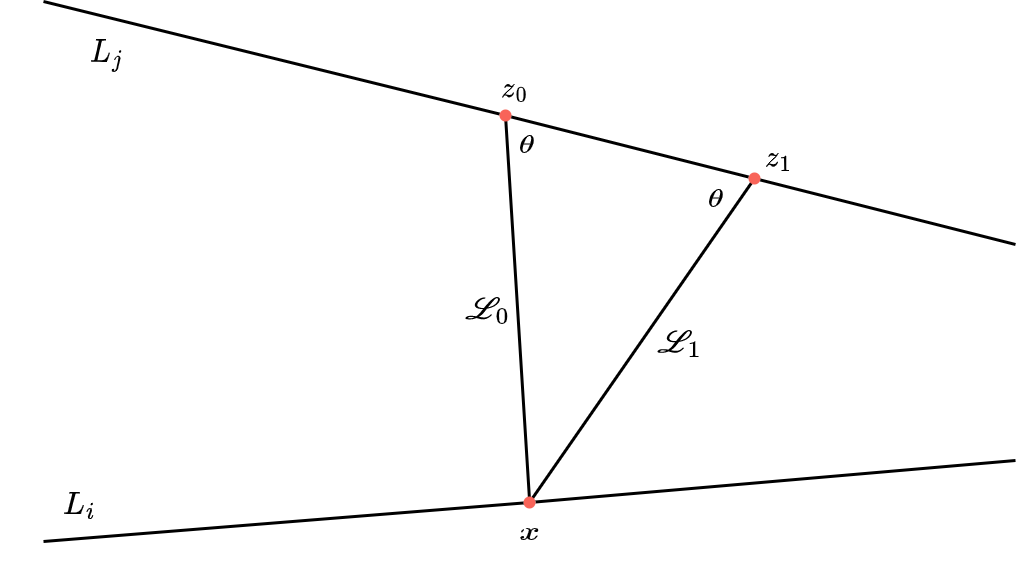}
    \caption{An illustration of orientation 0 and 1, angle $\theta$ projections of $x$ onto points $z_0, z_1$ in $L_j$. Note that an orientation $o \in \{0, 1\}$ corresponds with a choice in line $\mathscr{L}_o$.}
    \label{figRuleDef}
\end{figure}

\begin{definition}
Let $\theta \in (0, \pi/2]$, $o \in \{0, 1\}$, and $i \in \{1, ..., m\}$.  We call a mapping $r:X_m \rightarrow X_m$ a \emph{rule}, if $r(x)$ is an angle $\theta$, orientation $o$ projection of $x \in X_m$ onto a line $L_i$ in $X_m$, so $r(X_m)=L_i$.  We may also notate rules as $r(x; \theta, o, L_i)$ to make the parameters explicit.
\end{definition}

When the rule projection angle $\theta = \pi/2$, we simply write $r(x; \theta, L_i)$ when notating rules as there is only one possible orientation.   Figure \ref{fig1} provides visual demonstration of the composition of two rules, $r_1(x) \coloneqq r_1(x; \theta_1, 1, L_2)$ and $r_2(x) \coloneqq r_2(x; \theta_2, 0, L_3)$ over a point $x_0 \in L_1 \subset X_3$, so that 
\[
r_1(x_0) = x_1 \text{ and } r_2(r_1(x_0)) = x_2.
\]

\begin{figure}
    \centering
    \includegraphics[scale=0.35]{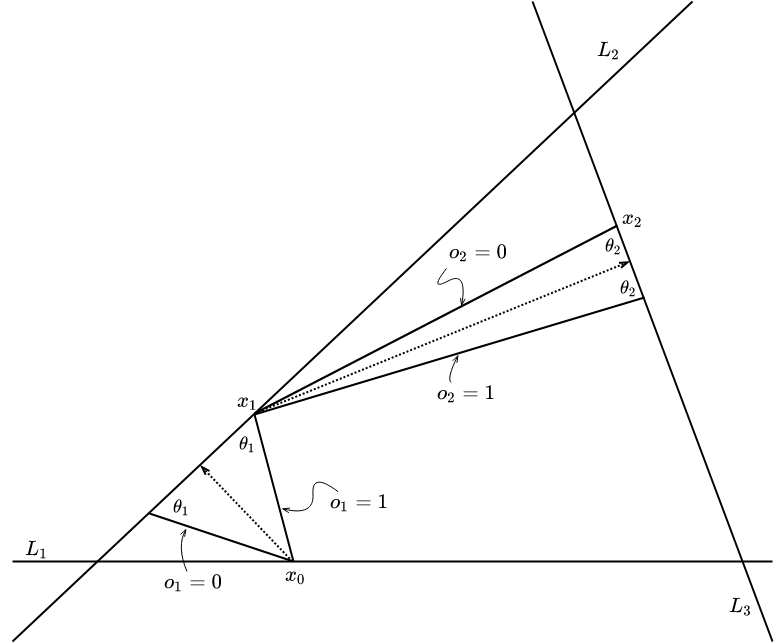}
    \caption{An illustration of the composition of two rules, $r_1(x; \theta_1, 1, L_2)$ and $r_2(x; \theta_2, 0, L_3)$ over a point $x_0 \in L_1$, so that $r_1(x_0) = x_1$, and $r_2(x_1) = x_2$.  The figure also shows the two orientation options for each rule, and the dotted line corresponds to the $\theta=\pi/2$ case for each rule.}
    \label{fig1}
\end{figure}

We require rule orientation to be defined in a predictable and consistent way, so that it is never ambiguous which projection points correspond to which orientation value.  For this paper, we choose a natural and mathematically convenient convention where the orientation 0 and 1 projection points under a rule $r(x)$ are the ``left" and ``right" points, ``from the perspective of $x$".  Figures \ref{fig1} and \ref{fig4} demonstrate this convention.

\begin{figure}
    \centering
    \includegraphics[scale=0.27]{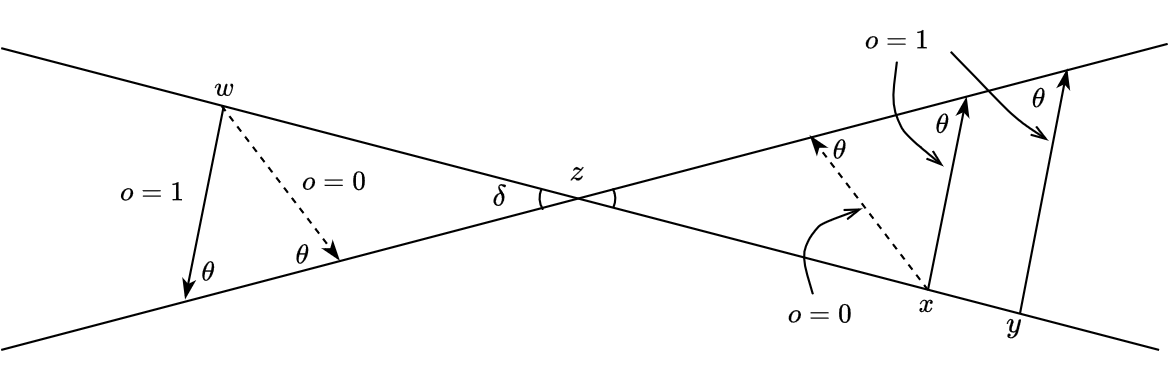}
    \caption{How orientation is preserved from translating $x$ to $w$ across line intersection point $z$, under a rule with projection angle $\theta$.  Note how the projection lines corresponding with a fixed orientation are antiparallel across line intersection point ($z$), and always map opposite the same line intersection angle ($\delta$).}
    \label{fig4}
\end{figure}

We define a \emph{rule sequence} associated to a space $X_m$ to be a sequence of $n \geq m \geq 3$ rules, denoted $\{r_i\}_{i=1}^n$, with the restriction that consecutive rules in the rule sequence, including $r_1$ and $r_n$, cannot map onto the same line in $X_m$.  Furthermore, we require each line in $X_m$ be mapped onto by at least one of the rules in an associated rule sequence.

\begin{definition}
An \emph{$n$-rule map} $T_n: X_m \rightarrow X_m$ is defined to be a cycling composition of $n\geq 3$ rules in an associated \emph{defining rule sequence} $\{r_i\}_{i=1}^n$.  That is, if $\{r_i\}_{i=1}^n$ is a sequence of rules defining $n$-rule map $T_n$,  then for $x \in X_m$, define iteration of $T_n$ so that
\[
T_n(T_{n}^{n+1}(x)) = T_{n}^{n+2}(x) = r_2(r_1(r_n(...r_2(r_1(x))))).
\]  
\end{definition}

Figure \ref{figMapIter} gives a visual example of iterating a $3$-rule map over $X_3$.

\begin{figure}
    \centering
    \includegraphics[scale=.27]{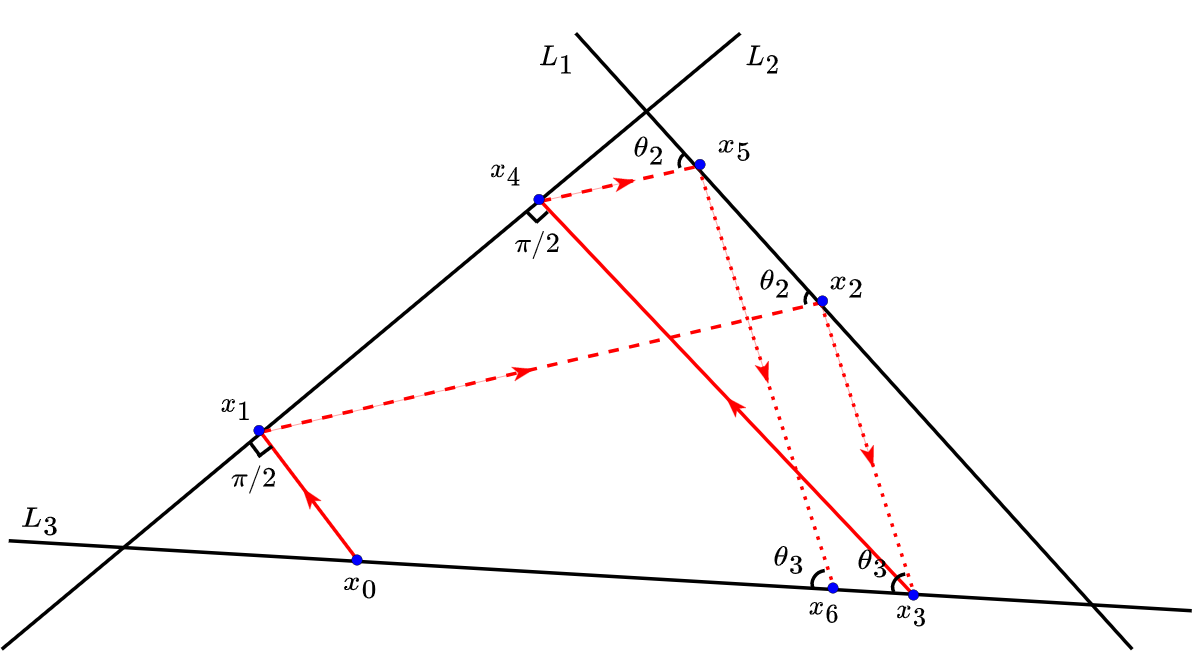}
    \caption{Demonstration of six iterations of a $3$-rule map $T_3$ over $X_3$, where $T_3(x_0) = x_1$, and $T_3^6(x_0) = x_6$.  The defining rule sequence of $T_3$ is $(r_1(x;\pi/2, L_2)$, $r_2(x;\theta_2, 1, L_1)$, $r_3(x;\theta_3, 0, L_3))$.  The solid lines correspond to rule $r_1$, the dashed lines to $r_2$, and the dotted lines to $r_3$.}
    \label{figMapIter}
\end{figure}

Unless otherwise stated, the pair $(X_m, T_n)$ denotes a dynamical system.  For a point $x \in X_m$, we let $\mathcal{O}(x)$ denote the \emph{orbit of $x$} under $n$-rule map $T_n$, so that 
\[
\mathcal{O}(x) \coloneqq \{x, T_n(x), T_n^2(x),...\}.
\]
We call an $n$-rule map \emph{redundant} if there exists a length $n'$ rule sequence with $m \leq n' < n$, such that for all $x \in X_m$, the orbit of $x$ under the $n'$-rule map is equal to the orbit of $x$ under the $n$-rule map.  For the purpose of this paper we assume all $n$-rule maps are not redundant.

\subsection{n-Rule maps and rules as similarities}\label{sec:similarity}

Let $L_1, L_2$ denote lines in $\mathbb{R}^2$, intersecting at point $z$ with acute or right angle $\delta$.  Let $x, y \in L_1$ lie on the same side of the line intersection point $z$, and take a rule $r$ that projects onto line $L_2$ with projection angle $\theta \in (0, \pi/2]$.  Further, let the orientation value of $r$ be chosen so that it maps $x$ and $y$ farthest from $z$  when $\delta$ is acute (see Figure \ref{fig4}, where orientation $o=1$ projects $x$ \& $y$ farthest from $z$).

Let $d$ be the Euclidean metric.  Assume $d(x, y) = \epsilon > 0$, and $a = d(z, x)$, $a + \epsilon = d(z, y)$.  Let $\gamma = \pi - \delta - \theta$, so that $\gamma = \angle zxr(x) = \angle zyr(y)$.  Then it follows by use of the law of sines that 
\[
d(r(x), r(y)) = \left |\left| \frac{a\sin(\gamma)}{\sin(\theta)} - \frac{(a+\epsilon) \sin(\gamma)}{\sin(\theta)}  \right|\right|_2 = \frac{\epsilon \sin(\gamma)}{\sin(\theta)}.
\]
Let $c = \sin(\gamma)/\sin(\theta)$, and hence $d(r(x), r(y)) = cd(x, y)$.  We see that if $0 < \theta < (\pi-\delta) / 2$ and $\delta$ is acute, then
\[
c = \frac{\sin(\gamma)}{\sin(\theta)} > 1.
\]
Furthermore, if $\theta = (\pi-\delta)/2$, then $c=1$, and when 
\[
\frac{\pi-\delta}{2} < \theta \leq \frac{\pi}{2}
\]
then $0 \leq c < 1$.  

By similar argument, when the rule $r$ has opposite orientation parameter (and hence maps $x$ and $y$ closer to $z$ in this case), we see that $d(r(x), r(y)) = cd(x, y)$ for some constant $c = c(\theta, \delta)$, computable using the law of sines, where $0 \leq c <1$ when $\delta/2 < \theta \leq \pi/2$, and $c = 1$ when $\theta= \delta/2$, and $c > 1$ when $0 <\theta < \delta / 2$.

When the points $x$ and $y$ are on opposite sides of the line intersection point $z$, it still holds that $d(r(x), r(y)) = cd(x, y)$.  To see this, let $x, y \in L_1$ lie on opposite sides of line intersection point $z$.  Then $d(r(x), z) = cd(x, z)$ and $d(r(y), z) = cd(y, z)$, and hence 
\[
d(r(x), r(y)) = d(r(x), z)+d(r(y), z) = cd(x, z) + cd(y, z) = cd(x, y)
\]As such, by fixing the projection angle and orientation parameters $\theta$ and $o$ of a rule $r$, and restricting the mapping of a rule from one line to another $X_m$, then $r$ becomes a similarity transformation.  That is 
\[
d(r(x), r(y)) = cd(x, y), \text{ } c\geq 0.
\]
We call the constant $c$ a \emph{similarity coefficient}.

Iteration of $n$-rule maps is defined to be a cycling composition of rules in an associated rule sequence, and as a consequence, after the first iteration of a $n$-rule map over a point in $X_m$, each rule in the rule sequence will always map between the same pair of lines since each rule in the sequence always projects onto the same line.  Hence, by way of the above analysis, iteration of a fixed $n$-rule map can be thought of as a cycling composition of similarity transformations, after the first iteration of the map.

Let $\hat{T}_n \coloneqq T_n^n$ denote the induced map of $n$-rule map $T_n$, so that $\hat{T}_n^l = T_n^{ln}$ for $l \in \mathbb{N}$, and $\hat{T}_n: L_{a_n} \rightarrow L_{a_n}$, where $L_{a_n}$ is the line the $n$th rule in the defining rule sequence of $T_n$ maps onto.  If the rules defining $T_n$ have similarity coefficients $c_1,..., c_n$, then let $C = c_1\cdot c_2 \cdots c_n$ label the \textit{similarity coefficient for the induced map} $\hat{T}_n$.

\section{n-Rule maps and closed curves}\label{sec:closed}

In this section we prove Theorem \ref{thm1}.  We begin by establishing the following.

\begin{figure}
    \centering
    \includegraphics[scale=.3]{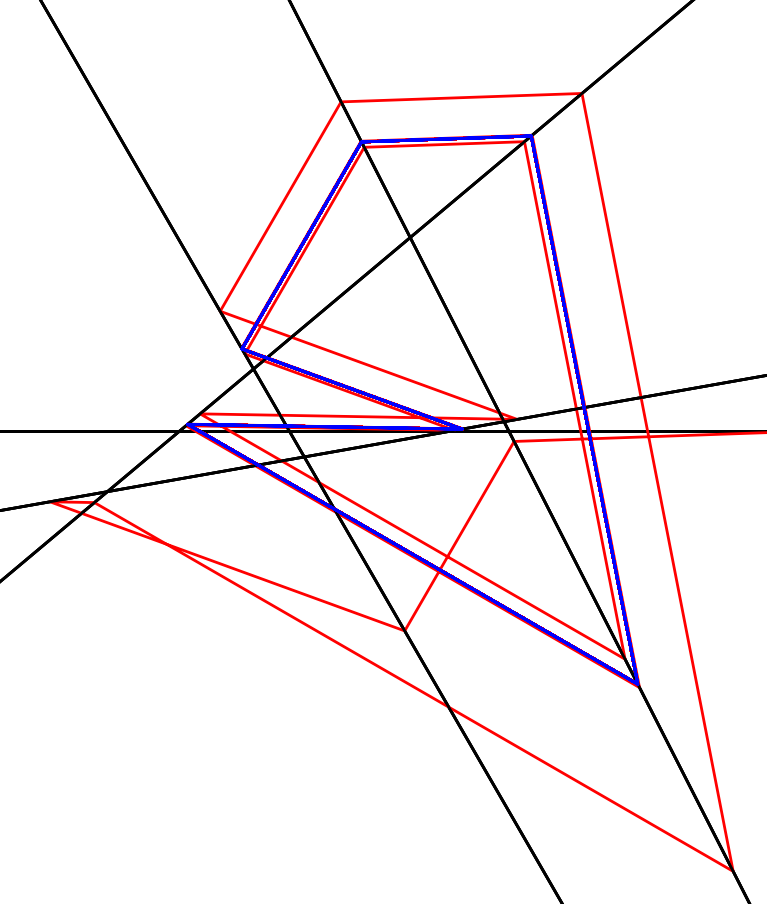}
    \caption{Result from a numerical simulation of iterating a $6$-rule map in $X_{4}$, which exhibits an orbit (red) converging to a six-cycle (blue).}
    \label{fig5}
\end{figure}

\begin{theorem}\label{thm2}
Let $(X_m, T_n)$ be a dynamical system, and let $\hat{T}_n$ be the induced map of $n$-rule map $T_n$, with similarity coefficient $C = c_1c_2\cdots c_n$.  If $0 \leq C<1$, then $T_n$ admits a unique periodic orbit of period $n$.  
\end{theorem}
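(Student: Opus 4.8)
The plan is to show that the induced map $\hat{T}_n \colon L_{a_n} \to L_{a_n}$ is a contraction, apply the Banach fixed point theorem to obtain a unique fixed point, and then verify that the $T_n$-orbit of that fixed point is a genuine period-$n$ orbit of $T_n$ (and not something of smaller period). First I would use the analysis of Section \ref{sec:similarity}: once we have iterated $T_n$ at least once, each rule $r_i$ maps between a fixed pair of lines and hence acts as a similarity with coefficient $c_i$, so for any $x, y$ in the image $\hat{T}_n(L_{a_n}) \subseteq L_{a_n}$ we get $d(\hat{T}_n(x), \hat{T}_n(y)) = C\, d(x,y)$ with $0 \le C < 1$. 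Strictly speaking I should work on $L_{a_n}$ after one application, or equivalently restrict attention to the forward-invariant set $\hat{T}_n(L_{a_n})$, which is itself (a subset of) a line and hence a complete metric space; on that set $\hat{T}_n$ is a contraction. By the Banach fixed point theorem there is a unique $p \in L_{a_n}$ with $\hat{T}_n(p) = p$, i.e.\ $T_n^n(p) = p$.

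Next I would propagate the fixed point around the cycle. Set $p_1 = p$ and $p_{k+1} = r_k(p_k)$ for $k = 1, \dots, n-1$, so that the orbit of $p$ under $T_n$ is $p = p_1 \mapsto \cdots \mapsto p_n \mapsto p_1$, using $r_n(p_n) = T_n^n(p) = p$. This shows $p$ lies on a periodic orbit of $T_n$ whose length divides $n$; I must rule out a shorter period. Here the non-redundancy hypothesis on $T_n$ and the structural constraint that consecutive rules (cyclically) map onto distinct lines of $X_m$ do the work: if the orbit had period $d$ properly dividing $n$, then the length-$d$ truncation of the defining rule sequence would generate the same orbit for this point, and — combined with the requirement that every line of $X_m$ is hit — one contrasts this with non-redundancy. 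Actually the cleanest route is: a period-$d$ orbit with $d \mid n$, $d < n$, would force $p_{k} = p_{k+d}$ for all $k$, but $p_k \in L_{a_k}$ and $p_{k+d} \in L_{a_{k+d}}$, and since consecutive labels differ this does not immediately contradict unless $d$ is forced; the honest statement is that the induced map's unique fixed point gives a unique $T_n$-orbit of \emph{period dividing} $n$, and non-redundancy of $T_n$ upgrades "dividing $n$" to "equal to $n$" — I would phrase the period-$n$ claim using that hypothesis explicitly.

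For uniqueness of the periodic orbit of $T_n$ itself: any period-$n$ orbit of $T_n$ must contain a point $q \in L_{a_n}$ (the last rule maps onto $L_{a_n}$), and that point satisfies $\hat{T}_n(q) = T_n^n(q) = q$, so $q = p$ by uniqueness of the fixed point; hence the orbit coincides with $\mathcal{O}(p)$. I expect the main obstacle to be the bookkeeping in the previous paragraph — cleanly separating "period dividing $n$" from "period exactly $n$" and invoking the non-redundancy assumption correctly — together with the minor technical point of identifying the correct complete invariant domain for the contraction (the whole line $L_{a_n}$ is complete, so in fact no restriction is needed once one observes $\hat{T}_n$ is globally $C$-Lipschitz on $L_{a_n}$, since every rule in the cyclic composition defining $\hat{T}_n$ already maps between a fixed pair of lines). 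Everything else is a direct application of Banach's theorem and the similarity computation already carried out in Section \ref{sec:similarity}.
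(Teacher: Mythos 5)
Your proposal is correct and follows essentially the same route as the paper: observe that $\hat{T}_n$ acts on the complete line $L_{a_n}$ as a contraction with constant $C<1$ (via the similarity analysis of Section \ref{sec:similarity}) and apply the Banach fixed point theorem to get a unique fixed point, hence a unique period-$n$ orbit of $T_n$. The additional bookkeeping you supply about minimal period versus period dividing $n$, and about uniqueness of the $T_n$-orbit through points of $L_{a_n}$, goes beyond what the paper writes out but does not change the argument.
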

\begin{proof}
Let $\hat{\mathcal{O}}(x)$ denote the orbit of $x \in X_m$ under $\hat{T}_n$.  It follows from definition of the induced map $\hat{T}_n$, that for any $x \in X_m$, the orbit $\hat{\mathcal{O}}(x)\setminus \{x\}$ of $x$ under $\hat{T}_n$, must be a subset of some line $L_{a_n} \subset X_m$ determined by the $n$th rule in the defining rule sequence of $T_n$.  Further, by hypothesis $0 \leq C<1$, and hence
\[
d(\hat{T}_n(x), \hat{T}_n(y)) \leq Cd(x, y)
\]
for any $x, y \in L_{a_n}$, so $\hat{T}_n$ is a contraction mapping.  But the line $L_{a_n}$ is a closed subset of $\mathbb{R}^2$ and necessarily complete.  Hence, by the contraction mapping theorem there exists a unique $x^* \in L_i$ such that $\hat{T}_n(x^*) = x^*$.  Then $T_n$ admits a unique periodic orbit of period $n$.
\end{proof}

Refer to Figure \ref{fig5} for visual demonstration of the type of dynamics Theorem \ref{thm2} provides.

Assume two lines $L_i, L_j$ in $X_m$ intersect at angle $\delta \leq \pi/2$.  Then if the defining rule sequence of an $n$-rule map $T_n$ over $X_m$ contains a rule mapping from $L_i$ to $L_j$ (or vice-versa) with projection angle $\theta = \delta$, then depending on rule orientation, iteration of this rule may project onto the intersection point of $L_i$ and $L_j$ every $n$ iterations, and hence the system collapses to a periodic orbit after at most $n$ iterations of $T_n$.  In such a case, we say the $n$-rule map $T_n$ is \emph{collapsing}.  Collapsing maps correspond with the case when the induced map of $n$-rule map $T_n$ has similarity coefficient $C=0$.  Figure \ref{fig3} gives a visual example of a collapsing map.

\begin{remark}\label{rem2}
If an $n$-rule map is not collapsing, then it is invertible.  
\end{remark}

\begin{figure}
    \centering
    \includegraphics[scale=.3]{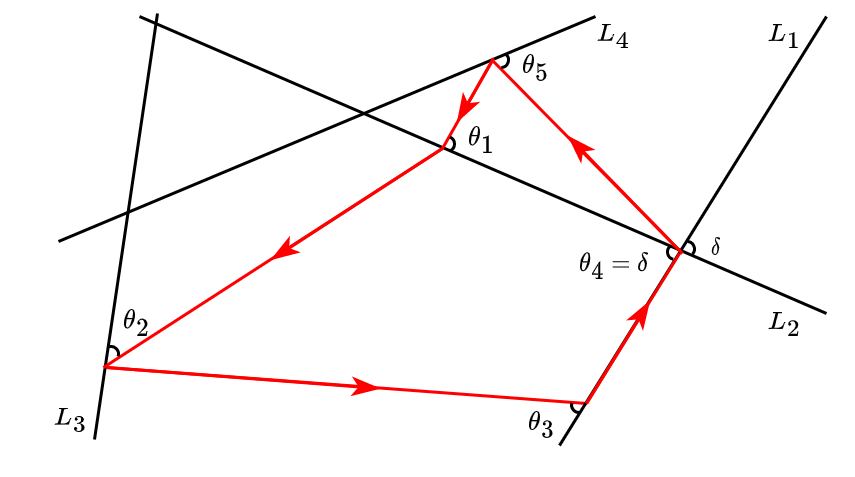}
    \caption{The closed curve corresponds to a periodic orbit generated by a collapsing $5$-rule map.  The map is composed of five rules, where $\theta_4=\delta$, the acute line intersection angle of $L_1$ and $L_2$.  And hence because the fourth rule maps from $L_1$ to $L_2$, with projection angle $\delta$ and orientation $1$, the intersection point of $L_1$ and $L_2$ is mapped onto, becoming a periodic point.}
    \label{fig3}
\end{figure}

If $m'$ lines in $X_m$ intersect at a common point $z$, with $2 \leq m' < m$, then a rule sequence $\{r_i\}_{i=1}^n$ may contain a subsequence of consecutive rules which map strictly between the $m'$ lines intersecting at $z$.  As a consequence, such a subsequence of consecutive rules would map $z$ to itself.  In the case that such a line intersection point $z$ is a periodic point, and a subsequence of rules map over this point, we say the periodic point $z$ is \textit{absorbing}, and that the subsequence of rules is an \textit{absorbed} subsequence.  For visual example, refer to Figure \ref{figAbsorbing}, demonstrating how a subsequence can be absorbed over a line intersection point (left), compared to no absorption (right).

An absorbed subsequence may be composed of $1 \leq k \leq n-2$ rules.  That $k \leq n-2$ is given by the nonconcurrency assumption of the lines composing $X_m$, and that every line in $X_m$ must be mapped onto by at least one rule in every rule sequence associated to an $X_m$.  Hence, there are always at least two rules in a rule sequence that cannot be absorbed.

\begin{figure}
    \centering
    \includegraphics[scale=.35]{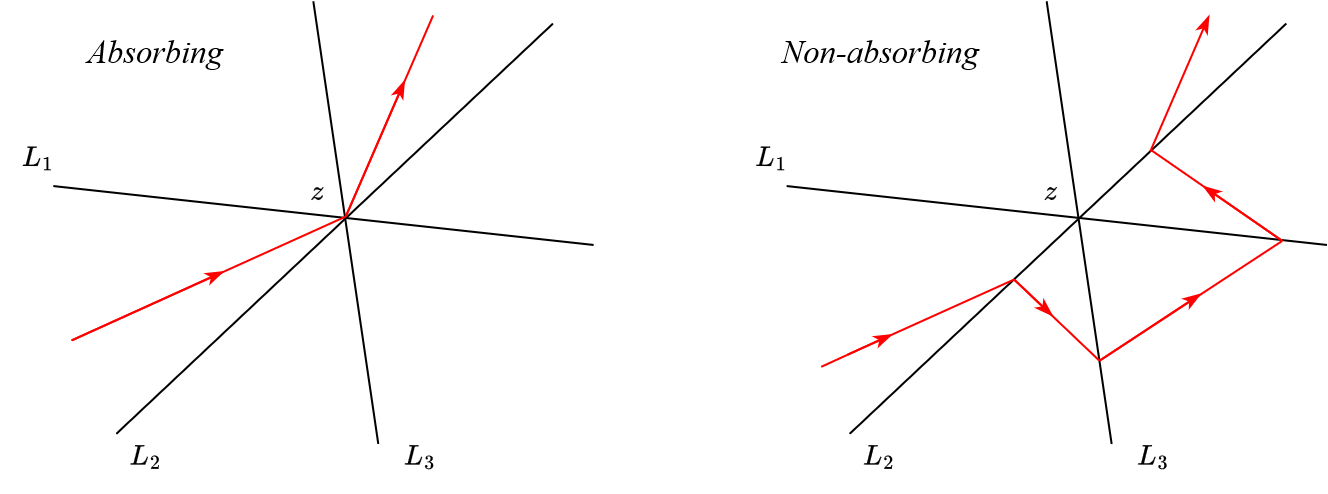}
    \caption{If a rule mapping onto line $L_2$ hits the intersection point $z = L_1\cap L_2 \cap L_3$, then the three following rules in the sequence mapping onto lines $L_3, L_1, L_2$, respectively, are ``absorbed."}
    \label{figAbsorbing}
\end{figure}

We are now in a position to prove Theorem \ref{thm1}.

\begin{proof}[Proof of Theorem \ref{thm1}]
Take a space $X_m$, a sequence $\theta_1, \theta_2,...,\theta_n$ of acute or right angles, as well as a sequence $L_{a_1}, L_{a_2}, ..., L_{a_n}$ of line labels over $X_m$ with no two consecutive labels the same, and each possible label occurring at least once in the sequence.

Fix a binary string $s \in \{0, 1\}^n$.
Consider the case when the angles $\theta_1, ..., \theta_n$ and orientation values $o_i= s_i$ of the $n$-rule map are such that the similarity coefficient for the induced map is $C<1$.  Then by Theorem \ref{thm2}, $T_n$ admits a periodic orbit, and by joining consecutive periodic points of this orbit with line segments, we obtain a closed curve $\Gamma$ over $X_m$ that admits an incidence angle sequence $\theta_1, \theta_2, ..., \theta_n$ with respect to the line sequence $L_{a_1}, L_{a_2}, ..., L_{a_n}$, by the definition of $T_n$.  Similarly, if $T_n$ is collapsing, then it has a periodic orbit, and this orbit corresponds to a closed curve $\Gamma$ admitting an incidence angle sequence $\theta_1, \theta_2, ..., \theta_n$ with respect to the line sequence $L_{a_1}, L_{a_2}, ..., L_{a_n}$.

If the $n$-rule map parameters $\{\theta_i\}$ and $\{o_i\}$ are such that $C>1$, then $T_n$ is not collapsing, and  has an inverse $n$-rule map $T^{-1}_n$ with corresponding induced map $\hat{T}_n^{-1}$ and similarity coefficient $C' =1/C$.  Then by Theorem \ref{thm2} $T_n^{-1}$ has a periodic orbit.  But $T^{-1}_n$ is the inverse of $T_n$, and hence this is also a periodic orbit for $T_n$.  Constructing a closed curve $\Gamma$ from this periodic orbit as above, we see that $\Gamma$ admits an incidence angle sequence $\theta_1, \theta_2, ..., \theta_n$ with respect to the line sequence $L_{a_1}, L_{a_2}, ..., L_{a_n}$.

Consider the case when parameters $\{\theta_i\}$ and $\{o_i\}$ give $C=1$.  Then iteration of the map does not converge to a periodic orbit.  But there must exist $\theta_i$ such that for every $\epsilon > 0$, the identical $n$-rule map with rule $r_i$ having projection angle $\theta_i \pm \epsilon$, has similarity coefficient $C \neq 1$, so one of the above cases holds.  But $\epsilon$ is arbitrary, so clearly the set of $(\theta_1,...,\theta_n) \in (0,\pi/2]^n$ that correspond to an $n$-rule map with $C=1$ with respect to a fixed line label sequence has Lebesgue measure zero.

\begin{remark}
    In many (but not all) cases, it is sufficient to fix any $i \in [n]$, and change the orientation value $o_i$ of the rule $r_i$ to $\overline{o_i}$, where $\overline{o_i} = o_i \oplus 1$, to obtain a new $n$-rule map with $C \neq 1$.
\end{remark}

Finally, consider the degenerate case when the orbit converges to a periodic orbit with an absorbing periodic point.  
Recall that given a line label sequence $L_{a_1},...,L_{a_n}$ with no consecutive labels the same, any absorbed subsequence of rules cannot have length greater than $n-2$, implying there are always at least two rules that cannot be absorbed.
If $z \in X_m$ is an absorbing periodic point, let rule $r_i$ be that non-absorbed rule mapping onto $z$. Fix any $\epsilon_i > 0$ and update the $n$-rule map so that rule $r_i$ now has projection angle $\theta_i' = \theta_i \pm \epsilon_i$.
The next rule $r_{i+1}$ is now clearly no-longer absorbed: if it were, $r_i$ would map onto $z$ with both projection angles $\theta_i$ and $\theta_i'$, an impossibility since it must be mapping from the same point.  If rule $r_{i+2}$ is still absorbed over $z$, repeat this process, which clearly halts.
The $\epsilon_i$ are arbitrary, so it follows that the projection angle parameters $(\theta_1,...,\theta_n) \in (0,\pi/2]^n$ that give degenerate closed curves with respect to a fixed line label sequence have Lebesgue measure zero.

The angle parameter sets corresponding to the $C=1$ and degenerate absorbing cases have zero measure in $(0,\pi/2]^n$, as does their union.
\end{proof}

The dynamics of ``classical billiards" in which the billiard undergoes specular reflection at the boundary is an archetypal example of conservative dynamics.  Classical billiards then fail to model phenomena that hold in regimes far from equilibrium.  In the direction of overcoming these restrictions, in \cite{chernov1993steady} the authors consider a dynamical system which corresponds to motion of a single particle reflecting off scatterers, but so that the particle is subjected to an electric field and a momentum dependent frictional force, so the Liouville measure is not preserved.  In a similar direction, a number of recent papers (e.g. \cites{markarian2010pinball, arroyo, arroyo2, gtroub}) have studied the dynamics of a type of non-conservative ``pinball billiards" where the ball is ``kicked" by the wall, giving a new impulse in the direction of the normal.  That is, the outgoing angle from a collision is a uniform contraction by a factor $\lambda \leq 1$, where the classical Hamiltonian case of elastic collisions is when $\lambda$ = 1.  For $\lambda<1$ the dynamics is dissipative, and thus gives rise to attractors.

Similarly, in the case of the dynamical system studied in this paper, the system resembles a particle that reflects off or refracts through a boundary as a function of the section of the boundary it is incident to.  
In fact, the role of $\lambda$ in pinball billiards papers is nearly identical to the role of similarity coefficient $C$ in our paper: $C=1$ represents the Hamiltonian case just as $\lambda=1$, and when $C< 1$ the dynamics give rise to attractors, just as in the $\lambda< 1$ case.  Indeed, the methods of studying classical polygonal billiards by way of pinball billiards can be carried over to study classical billiards using $n$-rule maps in a similar way, although additional analysis is needed along with the techniques used in proving Theorem \ref{thm1} to actually obtain classical billiard orbits.

For instance, in \cite{gtroub} the authors introduce the notion of $\lambda$-stability, in which polygonal billiard periodic trajectories can be classified as $\lambda$-stable if there is a periodic trajectory from the corresponding pinball billiard which converges to it as $\lambda\rightarrow 1$.  In particular, the billiard of any polygon can be embedded in a one-parameter family, parameterized by $\lambda \in (0, \infty)$, of billiards having periodic trajectories whenever $\lambda \neq 1$.   
The authors use these facts to characterize the $\lambda$-stable periodic trajectories of the billiard in $P$.

In our case, we obtain the conservative limit as $C\rightarrow 1$.  It is easy to show that if there exists a periodic billiard trajectory in a polygon, then there exists a closed curve generated by an $n$-rule map with $C\neq 1$ approximating the periodic billiard trajectory.  To this end, a natural and important question that arises would be to determine when an $n$-rule map with $C\neq 1$ has a periodic orbit contained strictly within a polygon cut out by the lines composing $X_m$.

Tiling billiards are also a recently studied type of billiards, in which trajectories refract through planar tilings, with positive and negative indices of refraction \cites{davis, davis2}.  We obtain a similar physical interpretation in our case: consider a cracked pane of glass that a beam of light shines through the edge of.  If various materials are allowed between the edges of the broken glass, or the fragments of glass themselves are of different types, the beam of light will reflect off or refract through the pieces in various ways that could lead to asymptotically stable behavior.  $n$-rule maps provide a natural tool to determine the asymptotic behavior of such a system.

\section{n-Rule maps defined using piecewise continuous rules}\label{sec:dist}

This section begins the second part of our analysis, in which we redefine $n$-rule maps so that rules map onto lines not on a basis of some fixed line label, but rather on a basis of a distance between points and lines.  Rules then become piecewise continuous, and this redefinition introduces discontinuities that complicate the dynamics. In this section we give basic results concerning the redefined $n$-rule maps, and then in Section \ref{sec:periodic} we prove Theorem \ref{thm3}, which shows when their orbits are asymptotically periodic.  

The following dynamical system most directly generalizes the system studied in \cite{everett} and shares characteristics with piecewise isometric dynamical systems \cites{goetz1, goetz2}.  However,  perhaps the most closely related dynamical system is that coming from the \emph{farthest point map}, defined as follows.  If $X$ is a compact metric space, the farthest point map $f$ is defined so that for $p \in X$, $f(p)$ is the the set of all points $q$ such that the distance from $p$ is maximized at $q$.  Typically, $f$ is single valued, so we obtain a well-defined map with which we can construct a dynamical system.  The farthest point map is a well studied dynamical system; see \cite{schwartz2022farthest} and \cite{schwartz2021farthestpointmapregular} for recent work on the regular octahedron and dodecahedron, and \cite{rouyer2005antipodes, rouyer2010antipodes,wang2020farthest} for study of the farthest point map on convex polyhedron, and \cite{vilcu2006properties} for a survey.

\subsection{Redefining n-rule maps}
Let $Y_m\subset \mathbb{R}^2$ label the space of $m \geq 3$ pairwise nonparallel, nonconcurrent line in $\mathbb{R}^2$.  If $L_i, L_j$ are two distinct lines in $Y_m$ where $x \in L_i$, we define
\[
d(x, L_j) \coloneqq \inf\{d(x, y)|y \in L_j\}
\]
to be the distance between point $x$ and line $L_j$, where $d$ is the Euclidean metric, and, in particular $d(x, L_i)=0$.

For every point $x \in Y_m$, we construct the set 
\[
D(x) = \{d(x, L_i) | 1 \leq i \leq m\}
\]
and define the partially ordered set $\mathcal{D}(x) \coloneqq (D(x), \leq)$.  We let $l$ denote an index on $\mathcal{D}(x)$, so that $l=i$, $1 \leq i \leq m$ corresponds with the $i$th farthest line from point $x$.  Note that if there exist $m'$ lines in $Y_m$, $m' < m$, that are all the same distance from point $x$, then there are $m'$ values of $l$ that do not correspond with a unique distance value in $\mathcal{D}(x)$, and thus do not correspond with a unique line in $Y_m$.

\begin{definition}
Let $\theta \in (0, \pi/2]$, $o \in \{0, 1\}$, and $l \in \{2, 3, ..., m\}$.  We call a mapping $r:Y_m \rightarrow Y_m$ a \emph{piecewise rule}, if $r(x)$ is an angle $\theta$, orientation $o$ projection $x \in Y_m$ onto the $l$th farthest line from $x$.  If the index $l$ corresponds with a distance value in $\mathcal{D}(x)$ that is not unique in $\mathcal{D}(x)$, put $r(x) = x$.
\end{definition}

As before, we notate piecewise rules as $r(x; \theta, o, l)$ to make the parameters of the rule explicit.  Furthermore, for the rest of the paper, we will refer to ``piecewise rules" as ``rules" for convenience, and when needed refer to the rules used in Sections \ref{sec:prelim} and \ref{sec:closed} as ``symbolic rules."

In the case when the index $l$ corresponds to a distance value in $\mathcal{D}(x)$ that is not unique so that $r(x) = x$, then we say $x$ is an \emph{invariant} point under rule $r$. Figure \ref{fig1} provides visual demonstration of the composition of two rules, 
\[
r_1(x) \coloneqq r(x; \theta_1, 1, 2) \text{ and } r_2(x) \coloneqq r(x; \theta_2, 0, 3)
\] 
over a point $x_0 \in Y_3$, so that $r_1(x_0) = x_1$ and $r_2(r_1(x_0)) = x_2$.  Intuitively, rule $r_1$ maps to the \textit{closest} line from a point $x$, and rule $r_2$ maps to the \textit{farthest} line from a point $x$ when applied to a space $Y_3$.  We leave rule orientation to be defined as previously, with the convention made visually explicit for this new class of rules in Figures \ref{fig1} and \ref{fig4}.

We leave rule sequences and $n$-rule maps defined as before, except for noting that $n$-rule maps defined by piecewise rules may only have one rule in the defining rule sequence, unlike those defined with symbolic rules which required at least three.  We let $K_n$ denote an $n$-rule map where the defining rules in the rule sequence are piecewise rules.  We may call such maps \textit{piecewise $n$-rule maps} for clarity, although for the remainder of this paper we will only work with piecewise $n$-rule maps, and hence we refer to them simply as ``$n$-rule maps" when the context is clear.  

For any piecewise $n$-rule map $K_n$, it is required that at least one of the rules in the associated rule sequence has index value $l>2$; such a restriction ensures the dynamics of a piecewise $n$-rule map are nontrivial.  If all rules of the rule sequence have $l$ index value of $l=2$, then each iteration maps to the ``closest" line, and orbits approach a line intersection point of $Y_m$, failing to exhibit behavior of interest.

Unless otherwise stated, the pair $(Y_m, K_n)$ denotes a dynamical system.  

If a point $x^* \in Y_m$ is invariant for $n'<n$ of the rules in the $n$-rule sequence defining $K_n$, then we say $x^*$ is \emph{sometimes invariant} under $K_n$.  If $x^*$ is invariant under all rules defining $K_n$, we say $x^*$ is \emph{strictly invariant} under $K_n$.  As such, any 1-rule map has only strictly invariant points.

\begin{remark}
For any $(Y_m, K_n)$ dynamical system, the set of strictly invariant and sometimes invariant points is finite.
\end{remark}

As before, we call an $n$-rule map \emph{redundant} if there exists a length $n'$ rule sequence with $1 \leq n' < n$, such that for all $x \in Y_m$, the orbit of $x$ under the $n'$-rule map is equal to the orbit of $x$ under the $n$-rule map.  We assume all piecewise $n$-rule maps are not redundant.

\subsection{Convergence and contraction of piecewise n-rule maps}  We now give results pertaining to piecewise $n$-rule maps that are used in determining asymptotic behavior of orbits.

The space $Y_m$ is composed of $m$ pairwise nonparallel, nonconcurrent lines, so any given space $Y_m$ has $\binom{m}{2}$ pairwise line intersection points.  Then, for each pairwise intersection point, let $\eta_i$ label the $i$th pairwise line intersection angle, where $0< \eta_i \leq \pi/2$.  Let 
\[
\delta = \min\left\{\eta_i | 1 \leq i \leq \binom{m}{2}\right\}
\]
label the least pairwise intersection angle between any two lines in $Y_m$.  Note $\delta$ must be acute by definition of $Y_m$.

\begin{definition}[Average Contraction Condition]
For piecewise $n$-rule map $K_n$, let $\bar{\theta}$ label the average of all projection angles in the $n$-rule sequence defining $K_n$.  Then if 
\begin{equation}\label{eq1}
\frac{\pi-\delta}{2} < \bar{\theta} \leq \frac{\pi}{2}
\end{equation}
for least angle $\delta$ in $Y_m$, we say $K_n$ satisfies the \emph{average contraction condition} with respect to $Y_m$.
\end{definition}

We motivate the introduction of the average contraction condition through the following observations, which are similar to those given in Section \ref{sec:similarity}.

Let $L_1, L_2$ denote lines in $\mathbb{R}^2$, intersecting at point $z$ with acute angle $\delta$.  Without loss of generality, let $x, y \in L_1$, and take a rule $r$, such that $r(x), r(y) \in L_2$, and $x, y, r(x), r(y)$ are on the same side of intersection point $z$.  Further, let the orientation value of $r$ be the choice that maps farthest from $z$.  For example, in Figure \ref{fig4} rule orientation value $o=1$ maps farther away from the line intersection point when mapping from the particular line.

Assume $d(x, y) = \epsilon > 0$, and $a = d(z, x)$, $a + \epsilon = d(z, y)$.  Let $\theta$ denote the projection angle of rule $r$, and let $\gamma = \pi - \delta - \theta$, so that $\gamma = \angle zxr(x) = \angle zyr(y)$.  Then if 
\[
0 < \theta < \frac{\pi-\delta}{2}
\]
it follows by use of the law of sines that 
\[
d(r(x), r(y)) = \left|\left| \frac{a\sin(\gamma)}{\sin(\theta)} - \frac{(a+\epsilon) \sin(\gamma)}{\sin(\theta)}  \right|\right|_2 = \frac{\epsilon \sin(\gamma)}{\sin(\theta)}
\]
but $\theta < (\pi-\delta) / 2$ and $\delta$ is acute, so under our choice of rule orientation value
\[
\frac{\sin(\gamma)}{\sin(\theta)} > 1
\]
and then $d(r(x), r(y)) > d(x, y)$: iteration of $r$ over $L_1$ and $L_2$ in such a way is then expansive.  By similar argument, we see that if $\theta = (\pi-\delta)/2$, then the rule defines an isometry and $d(r(x), r(y)) = d(x, y)$.  When 
\[
\frac{\pi-\delta}{2} < \theta \leq \frac{\pi}{2}
\]
then $d(r(x), r(y)) \leq cd(x, y)$, $0 \leq c < 1$.  Further, $\delta$ is acute, so in the case when $r(x)$ maps opposite the angle $\pi - \delta$, we have $\pi-\delta > \delta$, so if $r$ is contractive when mapping opposite $\delta$, it must also be contractive when mapping opposite $\pi-\delta$.  

From the above example, we see that for any rule $r$, with colinear $x, y$ and colinear $r(x), r(y)$ all on the same side of the line intersection point, we have
\[
d(r(x), r(y)) \leq cd(x, y), \text{ } c\geq 0
\]
where $c$ can be computed directly via the law of sines, as a function of the rule projection angle and the opposite line intersection angle.  In this case, we call such values $c$, \emph{separation coefficients}.

\begin{lemma}\label{lem7}
Let lines $L_1, L_2 \subset Y_m$ intersect at a point $z$ with acute angle $\delta$, and let $x, y \in L_1$ lie on the same side of $z$.  Let $r_1, r_2$ label rules with distinct orientation values, which are chosen so that the rules map farthest from $z$, and let the points $r_i(x), r_i(y) \in L_2$ and $r_i(r_j(x)), r_i(r_j(y)) \in L_1$, $i, j=1, 2$, $i \not=j$ all lie on the same side of $z$.  Then for corresponding rule separation constants $c_1$ and $c_2$, we have that $0 \leq c_1c_2 < 1$ if and only if
\begin{equation}\label{eq3}
    \frac{\pi-\delta}{2} < \frac{\theta_1 + \theta_2}{2} \leq \frac{\pi}{2}
\end{equation}
for rule projection angles $\theta_1, \theta_2$ corresponding with rules $r_1, r_2$.
\end{lemma}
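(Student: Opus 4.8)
The plan is to reduce the two-rule composition to a direct computation of the separation coefficients via the law of sines, exactly as in the single-rule analysis of Section~\ref{sec:similarity}, and then to analyze the resulting product as a function of the two projection angles. Concretely, write $c_1 = \sin(\pi-\delta-\theta_1)/\sin(\theta_1)$ and $c_2 = \sin(\pi-\delta-\theta_2)/\sin(\theta_2)$, since each rule maps between $L_1$ and $L_2$ (meeting at angle $\delta$), is chosen to map farthest from $z$, and all the relevant points lie on the same side of $z$ so that the angle $\gamma_k = \pi - \delta - \theta_k$ is indeed the triangle angle at the source point. Using $\sin(\pi - \delta - \theta_k) = \sin(\delta + \theta_k)$, the claim becomes
\[
\sin(\delta + \theta_1)\sin(\delta + \theta_2) < \sin(\theta_1)\sin(\theta_2) \iff \frac{\pi - \delta}{2} < \frac{\theta_1 + \theta_2}{2} \le \frac{\pi}{2}.
\]

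First I would convert both products into differences of cosines via the identity $2\sin A \sin B = \cos(A - B) - \cos(A + B)$. The left-hand side becomes $\cos(\theta_1 - \theta_2) - \cos(2\delta + \theta_1 + \theta_2)$ and the right-hand side becomes $\cos(\theta_1 - \theta_2) - \cos(\theta_1 + \theta_2)$; the $\cos(\theta_1 - \theta_2)$ terms cancel, so the inequality $c_1 c_2 < 1$ is equivalent to $\cos(\theta_1 + \theta_2) < \cos(2\delta + \theta_1 + \theta_2)$. Writing $s = \theta_1 + \theta_2$, this is $\cos(s) < \cos(s + 2\delta)$, which by the sum-to-product identity $\cos(s) - \cos(s+2\delta) = 2\sin(s+\delta)\sin(\delta)$ holds exactly when $\sin(s + \delta) < 0$ (using $0 < \delta \le \pi/2$ so $\sin\delta > 0$). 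Since $s \in (0, \pi]$ and $\delta$ is acute, $s + \delta \in (0, 3\pi/2)$, so $\sin(s+\delta) < 0$ precisely when $s + \delta > \pi$, i.e. $\frac{\theta_1 + \theta_2}{2} > \frac{\pi - \delta}{2}$. The upper bound $\frac{\theta_1+\theta_2}{2} \le \frac{\pi}{2}$ is automatic from $\theta_k \le \pi/2$, and it guarantees $s + \delta < 3\pi/2$ so that no second sign change occurs; this gives the ``if and only if.'' The boundary case $s + \delta = \pi$ gives $c_1 c_2 = 1$, consistent with the strict inequality. Nonnegativity of $c_1 c_2$ is clear since each $c_k \ge 0$, with $c_k = 0$ possible only in the degenerate collapsing situation.

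The main obstacle I anticipate is bookkeeping on the geometric setup rather than the trigonometry: one must be careful that the orientation choice ``map farthest from $z$'' really does make $\gamma_k = \pi - \delta - \theta_k$ the correct interior angle for \emph{both} legs of the composition simultaneously, and that the hypothesis placing $r_i(x), r_i(y)$ and $r_i(r_j(x)), r_i(r_j(y))$ all on the same side of $z$ is what licenses reusing the same-side distance formula $d(r(x), r(y)) = \epsilon \sin(\gamma)/\sin(\theta)$ from Section~\ref{sec:similarity} twice in a row. Once that is pinned down, multiplicativity of similarity coefficients under composition gives $d(r_2(r_1(x)), r_2(r_1(y))) = c_1 c_2 \, d(x,y)$, and the equivalence above is purely the identity $\cos s - \cos(s + 2\delta) = 2\sin\delta\sin(s+\delta)$ together with sign analysis on the interval $(0, 3\pi/2)$. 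I would also remark that the symmetry $\delta \leftrightarrow \pi - \delta$ noted before the lemma shows that mapping opposite the obtuse supplementary angle only helps contraction, so no separate case is needed for which of the two supplementary angles a rule maps opposite.
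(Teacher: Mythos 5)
Your proof is correct, and its computational core is the same as the paper's: both start from the law-of-sines expression $c_k=\sin(\pi-\delta-\theta_k)/\sin(\theta_k)$ for the farthest-from-$z$ orientation and reduce $c_1c_2<1$ to a trigonometric inequality via the product-to-sum identity. Where you differ is in the organization and in one genuinely delicate step. The paper proves the two implications separately: the forward direction by bounding $\sin\theta_2$ and $\sin\gamma_2$ using monotonicity of sine on suitable intervals (after implicitly taking $\theta_1>(\pi-\delta)/2$), and the converse by product-to-sum followed by ``removing cosine,'' a step it only justifies with the remark that the angle restrictions permit it even though cosine is not monotone. You instead run a single two-way equivalence: cancel the common $\cos(\theta_1-\theta_2)$ term, set $s=\theta_1+\theta_2$, and use $\cos s-\cos(s+2\delta)=2\sin\delta\,\sin(s+\delta)$ together with the observation that $s+\delta\in(0,3\pi/2)$, so the sign of $\sin(s+\delta)$ changes exactly at $s+\delta=\pi$. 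This buys a cleaner biconditional and, in particular, a fully rigorous replacement for the paper's cosine-removal step; it also makes the boundary case $s=\pi-\delta$ (giving $c_1c_2=1$) transparent. Your attention to the geometric bookkeeping --- that the same-side hypotheses are what allow the same-side distance formula to be applied on both legs, and that the upper bound $(\theta_1+\theta_2)/2\le\pi/2$ is automatic from $\theta_k\in(0,\pi/2]$ --- matches the intended reading of the lemma, so no gap remains.
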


Note that $c_1c_2<1$ implies composition of the two rules defines a contraction:
\[
d(r_i(r_j(x)), r_i(r_j(y))) \leq c_1c_2d(x, y),\text{ } 0 \leq c_1c_2<1
\]
for distinct $i, j$.  Further, the orientation values of the rules are chosen so that the rules map farthest from the line intersection point in each case, and thus the corresponding separation constants are maximized.  

\begin{proof}[Proof of Lemma \ref{lem7}]
Let $\gamma_1 = \pi - \theta_1 - \delta$ and $\gamma_2 = \pi - \theta_2 - \delta$.  We assume that
\[
\frac{\pi-\delta}{2} < \theta_1 \leq \frac{\pi}{2}
\]
so by Equation \ref{eq3}, we require that
\begin{equation}\label{eq4}
    \pi-\theta_1-\delta < \theta_2 \leq \pi - \theta_1
\end{equation}
By Equation \ref{eq4} we see that $\sin(\theta_2) > \sin(\pi-\theta_1-\delta)$, and that
\begin{align*}
    \sin(\gamma_2) &= \sin(\pi-\theta_2-\delta) \\
    & = \sin(\theta_2 + \delta) \\
    & \leq \sin(\pi-\theta_1+\delta)
\end{align*}
Then, through substitution we obtain
\[
c_1c_2 = \frac{\sin(\gamma_1)}{\sin(\theta_1)}\frac{\sin(\gamma_2)}{\sin(\theta_2)} 
    < \frac{\sin(\pi-\theta_1-\delta)}{\sin(\theta_1)}\frac{\sin(\pi-\theta_1+\delta)}{\sin(\pi-\theta_1-\delta)} 
\]
but $\sin(\pi-\theta_1 +\delta) = \sin(\theta_1 - \delta) < \sin(\theta_1)$, so 
\[
\frac{\sin(\pi-\theta_1-\delta)}{\sin(\theta_1)}\frac{\sin(\pi-\theta_1+\delta)}{\sin(\pi-\theta_1-\delta)}  = \frac{\sin(\pi-\theta_1+\delta)}{\sin(\theta_1)} < 1
\]
Going the other direction, let $\gamma_1 = \pi - \theta_1 - \delta$ and $\gamma_2 = \pi - \theta_2 - \delta$.  Then from 
\[
c_1c_2 = \frac{\sin(\gamma_1)\sin(\gamma_2)}{\sin(\theta_1)\sin(\theta_2)} < 1
\]
with substitution we obtain
\[
\sin(\pi-\theta_1-\delta)\sin(\pi-\theta_2-\delta) < \sin(\theta_1)\sin(\theta_2)
\]
By the product identity for sine, we have
\[
\frac{\cos(-\theta_1+\theta_2)-\cos(2\pi-\theta_1-\theta_2-2\delta)}{2} 
    < \frac{\cos(\theta_1-\theta_2)-\cos(\theta_1 + \theta_2)}{2} 
\]
but $\cos(-1(\theta_1-\theta_2)) = \cos(\theta_1-\theta_2)$ so upon simplifying we have
\[
\cos(2\pi-\theta_1-\theta_2-2\delta) > \cos(\theta_1+\theta_2)
\]
and by removing cosine we obtain
\begin{align*}
    2\pi-\theta_1-\theta_2-2\delta &< \theta_1+\theta_2
\end{align*}
Note that although cosine is not monotone, by the restrictions on the angles we can remove cosine in such a way.  This gives us
\[
\pi-\delta < \theta_1 + \theta_2 \Longrightarrow \frac{\pi-\delta}{2} < \frac{\theta_1 + \theta_2}{2}
\]
The case for the upper bound $(\theta_1+\theta_2)/2 \leq \pi/2$ is clear.
\end{proof}

In the above lemma, we took the rule orientation values to be chosen in a way that ensures the rules map farthest from the line intersection point.  If we instead use rule orientation values that force the rules to map closer to the line intersection points, then so long as the two projection angles $\theta_1, \theta_2$ satisfy Equation \ref{eq3}, \textit{both rules} must provide contraction (i.e. $c_1<1$ and $c_2<1$).  

That is, if $\gamma_2 = \pi-\theta_2-\delta$, and
\[
\frac{\sin(\gamma_2)}{\sin(\theta_2)} > 1
\]
under a rule orientation value mapping farther from a line intersection point, then under opposite rule orientation value, we have $\gamma_2' = \pi - (\pi - \theta_2) - \delta$, so
\[
\frac{\sin(\gamma_2')}{\sin(\pi - \theta_2)} = \frac{\sin(\theta_2-\delta)}{\sin(\theta_2)} < 1.
\]
As an immediate consequence of the above remark and Lemma \ref{lem7}, we obtain the following corollary.

\begin{corollary}\label{cor2}
Let $L_1, L_2 \subset Y_m$ intersect at point $z$ with acute angle $\delta$.  Further, let $K_n$ be a piecewise $n$-rule map so that iterates of $K_n$ map between $L_1$ and $L_2$, opposite angle $\delta$, and for initial points $x, y \in L_1$, let the first $n$ points of $\mathcal{O}(x), \mathcal{O}(y)$ remain on the same side of $z$.  Then if $K_n$ satisfies the average contraction condition for least angle $\delta$,
\[d(K^n_n(x), K^n_n(y)) \leq Cd(x, y), \text{ } 0 \leq C <1.\]
\end{corollary}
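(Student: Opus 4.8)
The plan is to factor the $n$-fold iterate $K_n^{n} = r_n \circ \cdots \circ r_1$ and bound the metric distortion one rule at a time. By hypothesis each $r_i$ maps between $L_1$ and $L_2$ opposite the angle $\delta$, and the first $n$ points of $\mathcal{O}(x)$ and of $\mathcal{O}(y)$ lie on the same side of $z$; hence at every stage $r_i$ is applied to a pair of collinear points on the same side of $z$ and returns a pair of collinear points again on the same side of $z$. By the same-side computation of Section \ref{sec:similarity} (the one carried out just before Lemma \ref{lem7}), each such $r_i$ then acts as a similarity on the relevant segment with a separation coefficient $c_i = c(\theta_i, \delta) \geq 0$ read off from the law of sines, so that $d(r_i(u), r_i(v)) \leq c_i\, d(u, v)$ for consecutive orbit points $u, v$. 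Composing these $n$ estimates gives $d(K_n^{n}(x), K_n^{n}(y)) \leq C\, d(x, y)$ with $C = c_1 c_2 \cdots c_n \geq 0$, so the entire problem reduces to showing $C < 1$.

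To control $C$ I would first discard the easy factors: a rule whose orientation sends points toward $z$ has $c_i < 1$ by the remark following Lemma \ref{lem7}, so such factors only help, and it suffices to bound the product of the remaining coefficients, which have the form $c_i = \sin(\theta_i + \delta)/\sin\theta_i$. Taking logarithms, $C < 1$ is equivalent to $\sum_i \big(\log\sin(\theta_i+\delta) - \log\sin\theta_i\big) < 0$, and I want to deduce this from the average contraction condition $\tfrac{\pi-\delta}{2} < \bar\theta \leq \tfrac{\pi}{2}$. The natural route is to bootstrap the two-rule inequality already proved in Lemma \ref{lem7}: group the projection angles into pairs whose pairwise averages still exceed $(\pi-\delta)/2$ (handling a leftover angle directly against the isometric threshold $(\pi-\delta)/2$ when $n$ is odd), and apply Lemma \ref{lem7} to each pair to write $C$ as a product of factors each strictly below $1$. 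An equivalent packaging is a smoothing argument: repeatedly replace a pair $(\theta_i, \theta_j)$ by $\big(\tfrac{\theta_i+\theta_j}{2}, \tfrac{\theta_i+\theta_j}{2}\big)$, track the monotonicity of $\sum_i\big(\log\sin(\theta_i+\delta) - \log\sin\theta_i\big)$ under this move, and drive all angles to the common value $\bar\theta$, for which the inequality is exactly the $n$-th power of the $\theta_1 = \theta_2 = \bar\theta$ instance of Lemma \ref{lem7}.

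The step I expect to be the real obstacle is precisely this passage from the \emph{average} condition on the angles to the \emph{product} bound on the coefficients. The single-variable function $g(\theta) = \log\sin(\theta+\delta) - \log\sin\theta$ vanishes exactly at $(\pi-\delta)/2$ and is strictly decreasing, but a direct computation of $g''$ shows it is convex on $\big(0, (\pi-\delta)/2\big)$ and concave on $\big((\pi-\delta)/2, \pi/2\big]$, so Jensen's inequality does not apply on the nose and one cannot freely pair a very small projection angle against several near-$\tfrac{\pi}{2}$ ones. Making the pairing/smoothing argument rigorous will, I expect, require invoking the geometric hypotheses more carefully than a first reading suggests — in particular using that all iterates stay on one side of $z$ and map between the fixed pair $L_1, L_2$ (a bounded configuration cannot sustain indefinite expansion) to limit how far into the convex regime the individual $\theta_i$ can stray, after which the smoothing move can be shown to decrease $\sum_i g(\theta_i)$ monotonically to $n\,g(\bar\theta) < 0$. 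Once $C < 1$ is in hand, the contraction estimate $d(K_n^{n}(x), K_n^{n}(y)) \leq C\, d(x, y)$ is immediate from the first paragraph.
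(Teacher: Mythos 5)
Your opening reduction is exactly the paper's framework: factor the cycle into its $n$ rules, treat each as a similarity with separation coefficient $c_i$, and reduce everything to $C = c_1c_2\cdots c_n < 1$. But you have correctly located where all the content lives, and your proposal does not supply it. The paper itself states Corollary \ref{cor2} as an ``immediate consequence'' of Lemma \ref{lem7} together with the remark about the opposite orientation; Lemma \ref{lem7} is a statement about \emph{two} angles, and neither the paper nor your sketch actually carries out the passage from the average condition $\bar\theta > (\pi-\delta)/2$ to the product bound $\prod_i c_i < 1$ for general $n$. Your proof therefore ends at a conjecture about a pairing/smoothing argument rather than at a proof.

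Moreover, the route you outline cannot be completed as described. The pairing step already breaks down: applying Lemma \ref{lem7} to a pair requires $\theta_j > \pi - \delta - \theta_i$, and since $\theta_j \leq \pi/2$ no admissible partner exists once $\theta_i \leq \pi/2 - \delta$. Worse, the inequality $\sum_i \bigl(\log\sin(\theta_i+\delta) - \log\sin\theta_i\bigr) < 0$ is simply not a consequence of the hypothesis on $\bar\theta$ alone. Take $\delta = \pi/3$ (so the threshold is $(\pi-\delta)/2 = \pi/3 \approx 1.047$) and projection angles $(\theta_1,\theta_2,\theta_3,\theta_4) = (0.01, \pi/2, \pi/2, \pi/2)$, giving $\bar\theta \approx 1.18$, so the average contraction condition holds; but $c_1 = \sin(2\pi/3 - 0.01)/\sin(0.01) \approx 87$ (and $\approx 86$ with the opposite orientation), while each right-angle rule contributes only $c = \cos\delta = 1/2$, so $C \approx 87\cdot(1/2)^3 > 10$, and one can choose $x, y$ far from $z$ so that all $n$ points of both orbits stay on one side of $z$. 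So the separation per cycle genuinely grows: no rearrangement, convexity trick, or appeal to the same-side/two-line hypotheses (which constrain the geometry of the orbit but not the angles) can yield $C < 1$ from the stated hypotheses when one projection angle is far below the threshold. The step you flagged as ``the real obstacle'' is thus not merely hard; as outlined it fails, and a correct argument needs strictly stronger input (for instance, each $\theta_i$ individually above the contraction threshold for its orientation, or $n = 2$, where Lemma \ref{lem7} applies verbatim).
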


We remark that here $C = c_1c_2\cdots c_n$, is the product of the $n$ separation constants coming from the piecewise rule sequence.

\begin{lemma}\label{lem6}
For all lines $L_i \subset Y_m$ and $x, y \in L_i$, if each closed interval
\[
[K_n^i(x), K_n^i(y)] \subset Y_m, \text{ } 0 \leq i \leq n
\]
contains no line intersection points or invariant points, then if $K_n$ satisfies the average contraction condition in $Y_m$,
\[
d(K^n_n(x), K^n_n(y)) \leq Cd(x, y), \text{ } 0 \leq C < 1.
\]
\end{lemma}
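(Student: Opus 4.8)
The plan is to adapt the argument behind Corollary \ref{cor2}, now allowing the orbit to move among arbitrary lines of $Y_m$ rather than bouncing between a fixed pair. Write $p_i = K_n^i(x)$ and $q_i = K_n^i(y)$. The first task is to show that $x$ and $y$ follow a common symbolic itinerary. The hypothesis $[p_i,q_i]\subset Y_m$ forces $p_i,q_i$ onto a common line $L_{b_i}$; and since $[p_i,q_i]$ meets no line intersection point and no invariant point, the assignment ``$l$th farthest line from the point'' is constant along $[p_i,q_i]$, because any point at which this assignment jumps is equidistant from two lines of $Y_m$, hence is either a line intersection point or an invariant point for a rule of the relevant index (in particular for $r_{i+1}$). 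Thus $r_{i+1}$ carries both $p_i$ and $q_i$ onto one line $L_{b_{i+1}}$, yielding a line itinerary $L_{b_0},L_{b_1},\dots,L_{b_n}$. Moreover, since $[p_i,q_i]$ avoids $z_{i+1}:=L_{b_i}\cap L_{b_{i+1}}$, the points $p_i,q_i$ lie on one side of $z_{i+1}$ on $L_{b_i}$, and likewise $p_{i+1},q_{i+1}$ lie on one side of $z_{i+1}$ on $L_{b_{i+1}}$.

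With the itinerary established, I would apply the law-of-sines computation from the discussion preceding Lemma \ref{lem7} at each step: since $p_i,q_i$ are collinear on $L_{b_i}$ and on the same side of $z_{i+1}$, and $r_{i+1}$ projects them onto $L_{b_{i+1}}$ with projection angle $\theta_{i+1}$, we get $d(p_{i+1},q_{i+1}) = c_{i+1}\,d(p_i,q_i)$, where $c_{i+1}$ is the separation coefficient determined by $\theta_{i+1}$ and the acute angle $\eta_{i+1}\in[\delta,\pi/2]$ between $L_{b_i}$ and $L_{b_{i+1}}$. Telescoping over $i=0,\dots,n-1$ gives $d(K_n^n(x),K_n^n(y)) = C\,d(x,y)$ with $C = c_1c_2\cdots c_n\geq 0$.

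It remains to prove $C<1$, and this is the step I expect to be the main obstacle, since it is where the average contraction condition is used. The approach I would take is first to eliminate the dependence on the individual intersection angles: each $\eta_i\geq\delta$, and mapping opposite a larger intersection angle is no less contractive in the relevant range, so I would try to bound each $c_i$ above by the separation coefficient of a rule with projection angle $\theta_i$ that maps opposite the least angle $\delta$, reducing matters to showing $\prod_i \frac{\sin(\delta+\theta_i)}{\sin\theta_i}<1$ whenever $\frac{\pi-\delta}{2}<\bar\theta\leq\frac{\pi}{2}$. I would then prove this last inequality by the trigonometric route of Lemma \ref{lem7}: the function $\theta\mapsto\log\frac{\sin(\delta+\theta)}{\sin\theta}$ vanishes at $\theta=(\pi-\delta)/2$ and is strictly decreasing, and the goal is to combine this with a convexity property (or with a pairwise reduction to a two-rule estimate as in Lemma \ref{lem7}) strong enough to turn the bound on the average $\bar\theta$ into negativity of the sum of logarithms. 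Getting this estimate to hold uniformly over all admissible itineraries — in particular when some projection angles sit far from the average — is the delicate point, and is where I expect most of the work to lie.
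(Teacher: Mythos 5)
The first half of your plan is sound, and is in fact more careful than the paper itself: the hypothesis that each interval $[K_n^i(x),K_n^i(y)]$ avoids intersection and invariant points does exactly what you say (it forces a common itinerary and keeps each pair on one side of the relevant intersection point), and telescoping the separation coefficients gives $d(K_n^n(x),K_n^n(y))=C\,d(x,y)$ with $C=c_1\cdots c_n$. The genuine gap is the step you yourself flag: deducing $C<1$ from the average contraction condition, and the route you sketch cannot be completed as stated. Two concrete problems. First, the pointwise reduction ``mapping opposite a larger intersection angle is no less contractive'' is false in the range where it matters: the coefficient opposite an angle $\eta$ is $\sin(\eta+\theta)/\sin\theta$, which is not monotone in $\eta$ (e.g.\ $\theta=60^\circ$: opposite $\eta=30^\circ$ it is $1/\sin 60^\circ\approx 1.15$, opposite $\delta=20^\circ$ it is $\sin 80^\circ/\sin 60^\circ\approx 1.14$); it only becomes monotone once $\delta+\theta\ge\pi/2$. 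Second, and more fundamentally, the inequality you reduce to, $\prod_i \sin(\delta+\theta_i)/\sin\theta_i<1$ whenever $\tfrac{\pi-\delta}{2}<\bar\theta\le\tfrac{\pi}{2}$, is simply false: take $\delta=\pi/6$ and $n=7$ with six projection angles equal to $\pi/2$ and one equal to $10^\circ$ taken with the expansive orientation; then $\bar\theta\approx 78.6^\circ>75^\circ$, yet the product is $(\cos 30^\circ)^6\,\sin 40^\circ/\sin 10^\circ\approx 1.56$. No convexity or Jensen-type argument can rescue this, because $f(\theta)=\log\bigl(\sin(\delta+\theta)/\sin\theta\bigr)$ is unbounded as $\theta\to 0^+$ and convex on part of the admissible range, so a single small projection angle can outweigh arbitrarily many contracting steps while the \emph{average} condition still holds. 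So your instinct that this is ``where most of the work lies'' is right, but the work cannot be done along the line you propose without strengthening the hypothesis (e.g.\ constraining orientations or the dispersion of the $\theta_i$, not just their mean).

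For comparison, the paper does not attempt this product estimate at all: its proof of Lemma \ref{lem6} cites Corollary \ref{cor2} (contraction of the $n$-fold composition when all steps map opposite the least angle $\delta$) and then notes that for any other intersection angle $\eta_i\ge\delta$ the contraction threshold satisfies $(\pi-\eta_i)/2\le(\pi-\delta)/2$, so the same conclusion is claimed along an arbitrary admissible itinerary. Be aware, though, that this only relocates the difficulty you identified: the counterexample above has every step opposite the same angle $\delta$, i.e.\ it sits inside the hypotheses of Corollary \ref{cor2}, whose justification in the paper rests on the two-rule Lemma \ref{lem7} and a remark about orientations. So if you rewrite your proof to follow the paper, the honest statement is that steps one and two of your argument stand, and the final step is obtained by invoking Corollary \ref{cor2} together with the threshold monotonicity in $\eta$ --- not by the uniform trigonometric product bound you were aiming for, which is unavailable under the average contraction condition alone.
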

\begin{proof}
Let $\delta$ label the least pairwise intersection angle in $Y_m$.  Then by Corollary \ref{cor2}, if $K_n$ satisfies the average contraction condition over $Y_m$, and iteration of $K_n$ is strictly opposite angle $\delta$, then $d(K^n_n(x), K^n_n(y)) \leq Cd(x, y)$ for $C \in [0, 1)$.  But $\delta$ is the least angle in $Y_m$, so if iteration of $K_n$ contracts opposite angle $\delta$ on average, then it must also contract opposite every other angle in $Y_m$ on average: if $\eta_i$ is a distinct line intersection angle, then $\eta_i \geq \delta$, and
\[
\frac{\pi-\eta_i}{2} \leq \frac{\pi-\delta}{2}.
\]
As such, assuming the conditions of the statement, it follows that
\[d(K^n_n(x)K^n_n(y)) \leq Cd(x, y)\] for $C \in [0, 1)$.
\end{proof}

We note the average contraction condition ensures contraction regardless of rule orientation.  The average contraction condition provides sufficient but not necessary conditions for an $n$-rule map to define a contraction on average. 

\begin{lemma}\label{lem1}
If $K_n$ satisfies the average contraction condition over $Y_m$, then there exists bounded regions $R, R' \subset Y_m$ such that for all $x \in R$, $\overline{\mathcal{O}(x)} \subset R'$.
\end{lemma}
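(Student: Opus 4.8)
The plan is to show that a large closed disk, intersected with $Y_m$, is forward-invariant under the return map $K_n^n$, and then to enlarge it by its own intermediate images to obtain a bounded set that is forward-invariant under $K_n$ itself. I would fix a reference point $p_0\in\mathbb{R}^2$ (say, the centroid of the $\binom{m}{2}$ pairwise intersection points of $Y_m$) and, for $\rho>0$, write $B_\rho$ for the closed Euclidean disk of radius $\rho$ about $p_0$. First I would record the structural facts. Along each line of $Y_m$ every distance function $x\mapsto d(x,L_i)$ is piecewise linear, so the underlying line-ranking of $\mathcal{D}(x)$ changes at only finitely many points of $Y_m$; hence each rule $r(\cdot;\theta,o,l)$ is discontinuous on $Y_m$ only at the finitely many points where its target line is not uniquely determined, and elsewhere it coincides — between the two fixed lines it then joins — with a similarity of a single ratio, namely the corresponding separation coefficient (this is the computation of Section~\ref{sec:similarity}, which via Figure~\ref{fig4} also gives continuity of a rule across a line intersection point). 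Composing, $K_n^n$ has a finite discontinuity set on $Y_m$, so $Y_m$ decomposes into finitely many \emph{cells} — the $2m$ terminal rays and finitely many bounded segments — and on each cell $K_n^n$ is affine, i.e.\ the restriction of a Euclidean similarity.

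Next I would analyze the unbounded cells. On such a cell $E$ let $\Psi_E$ be the Euclidean similarity extending $K_n^n|_E$; its ratio is $C=c_1c_2\cdots c_n$, the product of the $n$ separation coefficients along the rule itinerary followed on $E$. Since the far part of $E$ lies beyond all pairwise intersection points and all invariant points, Lemma~\ref{lem6} applies to two nearby points there and forces $C<1$ (this is exactly where the average contraction condition enters), and because $K_n^n|_E$ is affine this ratio is the same on all of $E$. Thus $\Psi_E$ is a contraction with a unique fixed point $w_E$, and $d(K_n^n(x),w_E)=C\,d(x,w_E)$ for every $x\in E$. I would set $M=\max_E d(w_E,p_0)$ over the finitely many unbounded cells, and let $M_b$ be the supremum of $d(K_n^n(x),p_0)$ as $x$ ranges over the finitely many bounded cells and the finitely many discontinuity points — finite because $K_n^n$ is affine (hence bounded) on each bounded cell. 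Then I would choose
\[
\rho_0=\max\left\{M_b,\ \frac{(1+C)M}{1-C}\right\}+1.
\]
For $x\in B_{\rho_0}\cap Y_m$ lying in a bounded cell (or at a discontinuity point) we get $d(K_n^n(x),p_0)\le M_b\le\rho_0$, while for $x$ in an unbounded cell $E$,
\[
d(K_n^n(x),p_0)\le d(K_n^n(x),w_E)+M=C\,d(x,w_E)+M\le C(\rho_0+M)+M\le\rho_0 .
\]
Hence $K_n^n(B_{\rho_0}\cap Y_m)\subseteq B_{\rho_0}\cap Y_m$.

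Finally I would put $R=B_{\rho_0}\cap Y_m$ and $R'=\overline{\bigcup_{i=0}^{n-1}K_n^i(R)}$. Each $K_n^i$ is piecewise affine on $Y_m$ with finitely many cells, so it sends the bounded set $R$ to a bounded set; thus $R'$ is bounded, and $R,R'\subset Y_m$ since $Y_m$ is closed in $\mathbb{R}^2$ and $K_n$ maps $Y_m$ into itself. For $x\in R$ and $j\ge 0$, writing $j=kn+i$ with $0\le i<n$, the invariance of $R$ under $K_n^n$ gives $(K_n^n)^k(x)\in R$, so $K_n^j(x)=K_n^i\big((K_n^n)^k(x)\big)\in K_n^i(R)$. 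Therefore $\mathcal{O}(x)\subseteq\bigcup_{i=0}^{n-1}K_n^i(R)\subseteq R'$, and since $R'$ is closed, $\overline{\mathcal{O}(x)}\subseteq R'$, which is the claim.

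The hard part will be the step asserting that $K_n^n$ restricts to a \emph{contracting} similarity on each unbounded cell, and it splits into two points that need care. One must first show that along a terminal ray the entire itinerary stabilizes — the ranking $\mathcal{D}(x)$ and the side of each intersection point onto which successive rules land — so that $K_n^n$ is genuinely given there by one fixed similarity with a well-defined ratio; this is where pairwise nonparallelism of the lines of $Y_m$ is used, since it makes $d(x,L_i)\to\infty$ along every line for every $i$ not indexing that line, freezing the ranking and the relevant orientations far out. One must then show the resulting ratio $C=c_1\cdots c_n$ is strictly less than $1$, which is precisely Lemma~\ref{lem6} applied to two nearby points far along the ray, where all intermediate segments miss the finite set of intersection points and invariant points. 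The remaining ingredients — finiteness of the cell decomposition and affineness of $K_n^n$ on each cell — are routine from the piecewise-linearity of the distance functions along each line together with the description of individual rules as similarities in Sections~\ref{sec:similarity} and~\ref{sec:dist}.
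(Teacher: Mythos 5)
Your proposal is correct in substance, but it follows a genuinely different and far more quantitative route than the paper's own argument. The paper proves Lemma \ref{lem1} in three qualitative lines: under the average contraction condition, iteration on average moves points closer to the line intersection points, all of which lie in a bounded region because the lines of $Y_m$ are pairwise nonparallel, so orbits cannot wander off to infinity. You instead exploit the piecewise-affine structure: $Y_m$ decomposes into finitely many cells on which the cycle map $K_n^n$ acts as a single similarity, the far-field stabilization of the itinerary along each terminal ray together with Lemma \ref{lem6} forces the ratio on every unbounded cell to be strictly less than one, and you then exhibit an explicit disk $B_{\rho_0}$ that is forward-invariant for $K_n^n$ and enlarge it by the intermediate images $K_n^i(R)$. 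What your route buys is an explicit absorbing region (with radius computable from the far-field fixed points and ratios) and in fact a stronger conclusion, since every sufficiently large disk serves as $R$; what the paper's route buys is brevity, at the cost of leaving the step ``maps closer to intersection points on average'' informal --- your far-field contraction estimate is precisely the quantitative content that sketch omits. Three small repairs are needed but do not affect the architecture: since $K_n^n$ sends a terminal ray into a generally different line of $Y_m$, the point $w_E$ should be taken as the fixed point of a planar similarity extending $K_n^n|_E$ (it still satisfies $d(K_n^n(x),w_E)=C_E\,d(x,w_E)$ for all $x$ in the cell); the constant $C$ in your choice of $\rho_0$ should be $\max_E C_E<1$ over the finitely many unbounded cells rather than a single cell's ratio; and if some rule along a far-field itinerary is collapsing (separation coefficient $0$), the hypothesis of Lemma \ref{lem6} fails because an intermediate segment degenerates to a line intersection point, but then $K_n^n$ is eventually constant on that ray and the required bound is immediate.
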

\begin{proof}
By definition of $n$-rule maps and the average contraction condition, iteration of an $n$-rule map $K_n$ in $Y_m$ must, on average, map closer to line intersection points.  The lines composing $Y_m$ are pairwise nonparallel, so all lines must intersect, and there must exist a bounded region $R$ containing all such line intersection points.  As such, if iteration of $K_n$ maps closer to line intersection points on average, then iteration of the map must remain in a bounded region $R'$.
\end{proof}

Immediate from proof of Lemma \ref{lem1}, we obtain the following corollary.

\begin{corollary}\label{cor1}
If $K_n$ satisfies the average contraction condition over $Y_m$, then any sequence of points taken from successive preimages of $K_n$ over noninvariant points $x \in Y_m$ diverges in $Y_m$.
\end{corollary}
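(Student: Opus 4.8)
The plan is to run the averaging argument behind Lemma~\ref{lem1} in reverse. Recall that that proof rests on the observation that, under the average contraction condition, each full cycle of $K_n$ displaces points on average strictly \emph{toward} the bounded region containing all $\binom{m}{2}$ pairwise intersection points of $Y_m$. Quantitatively, Corollary~\ref{cor2}, together with the monotonicity $\frac{\pi-\eta_i}{2}\le\frac{\pi-\delta}{2}<\bar\theta$ exploited in Lemma~\ref{lem6}, shows that opposite \emph{any} pairwise intersection angle $\eta_i$ the product $c_1c_2\cdots c_n$ of the separation coefficients realized along a cycle that avoids line intersection points and invariant points is strictly less than $1$; and since each $c_j$ is computed from the law of sines out of one of the $n$ projection angles and one of the finitely many intersection angles of $Y_m$, only finitely many such $n$-fold products occur, so a single constant $C^{*}\in[0,1)$ dominates them all. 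Reading this backwards, one cycle of the (branch-wise) inverse of $K_n$ expands the distance between matched orbit segments by a factor at least $1/C^{*}>1$, provided the segment stays away from the finite set of line intersection and invariant points.

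I would then argue by contradiction. Fix a noninvariant $x\in Y_m$ and an infinite sequence $x=x_0,x_{-1},x_{-2},\dots$ of successive preimages, and suppose it does \emph{not} diverge, so that $\{x_{-k}\}$ lies in a bounded $B\subset Y_m$. Discarding an initial segment, we may assume no $x_{-k}$ is one of the finitely many invariant points, and then $\{x_{-k}\}\subset B\cap Y_m$ has compact closure, so the backward orbit stays at distance at least some $\rho>0$ from every line intersection point. Choose a noninvariant companion $y_0$ on the same line as $x_0$, close enough to $x_0$ that every interpolating segment $[x_{-j},y_{-j}]$ stays clean as $j$ runs over a chosen range, and lift $y_0$ to preimages $y_{-j}$ along the same branches. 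Lemma~\ref{lem6} applied forward over blocks of $n$ steps then gives $d(x_{-\ell n},y_{-\ell n})\ge (C^{*})^{-\ell}\,d(x_0,y_0)$; letting $\ell\to\infty$ (adjusting the companion point so the segments remain clean as the two orbits separate) forces the left side to infinity, contradicting $\{x_{-k}\}\subset B$. Hence every such preimage sequence diverges, which is the sense in which the result is ``immediate from the proof of Lemma~\ref{lem1}.''

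The step I expect to be the main obstacle is the piecewise bookkeeping required to make this backward estimate genuinely uniform. Going backwards one does not control in advance which line each preimage occupies, hence which intersection angle each step maps opposite, so the expansion factor $1/C^{*}>1$ must be extracted uniformly over the finite list of admissible (projection angle, intersection angle) pairs; and one must check that a hypothetically bounded preimage sequence meets the finitely many line intersection and invariant points only finitely often, and that the interpolating segments in the two-orbit comparison can be kept free of those points even as the orbits spread apart. Once this uniform backward expansion is in hand, the divergence is immediate.
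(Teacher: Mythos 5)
Your quantitative setup (finitely many (projection angle, intersection angle) pairs, hence a uniform $C^{*}<1$ dominating every cycle that avoids the bad set, giving backward expansion by $1/C^{*}$) is fine, but the contradiction you draw from it does not close. You assume only that the backward orbit $\{x_{-k}\}$ of $x$ is bounded, and you conclude from Lemma \ref{lem6} that $d(x_{-\ell n},y_{-\ell n})\to\infty$ for a companion preimage sequence $\{y_{-k}\}$. That is not a contradiction: nothing forces the companion's preimages to stay in $B$, so the growing separation is entirely consistent with $\{x_{-k}\}$ remaining bounded while the $y_{-k}$ escape. The two-point estimate only says the \emph{pair} separates; it gives no lower bound on how far the single orbit of $x$ travels. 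The auxiliary device of ``adjusting the companion point so the segments remain clean'' makes this worse rather than better: each readjustment restarts the comparison at a small $d(x_0,y_0)$, so the cumulative factor $(C^{*})^{-\ell}$ is lost, and in any case the interpolating segment $[x_{-j},y_{-j}]$ must eventually meet points of $\Omega$ or line intersection points once the pair has separated by more than the gap $\rho$, at which point Lemma \ref{lem6} no longer applies.

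The paper's argument (stated as immediate from the proof of Lemma \ref{lem1}) is a one-orbit statement, and that is the repair you need: take the comparison point to be the relevant line intersection point itself rather than a second orbit. Each rule applied between lines meeting at $z$ satisfies $d(r(x),z)=c\,d(x,z)$ with $c$ one of the finitely many separation coefficients, and under the average contraction condition the product over a cycle is at most $C^{*}<1$; since all $\binom{m}{2}$ intersection points lie in a fixed bounded set, forward cycles move points toward that set (this is exactly Lemma \ref{lem1}), so along any sequence of successive preimages the distance to the intersection-point region grows geometrically up to a bounded additive error, and the sequence leaves every bounded subset of $Y_m$. This argument needs no companion orbit and no clean-segment bookkeeping, and it is the sense in which the corollary is immediate from Lemma \ref{lem1}; as written, your proposal has a genuine gap at the final step.
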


\section{Asymptotic behavior of piecewise n-rule maps}\label{sec:periodic}

In this section, we study the asymptotic properties of piecewise $n$-rule maps satisfying the average contraction condition over $Y_m$.  For piecewise $n$-rule map $K_n$ and point $x \in Y_m$, we call a \emph{cycle} of $K_n$ over $x$ the application of $K_n$ to $x$, $n$ times; the cycle of $x_0 \in Y_m$ under $K_n$ is the sequence of points $x_0, x_1, ..., x_n$, where $K_n^n(x_0) = x_n$.  We let $\mathcal{K}_n \coloneqq K_n^n$ label the \emph{cycle map} of $K_n$, so that for $x_0 \in Y_m$, $\mathcal{K}_n(x_0) = x_n$, and $\mathcal{K}^t_n(x_0) = K^{tn}_n(x_0) = x_{tn}$.

If rule $r_i$ in the rule sequence of $n$-rule map $K_n$ has sometimes invariant point $q$ in $Y_m$, then for every $h \in Y_m$ such that $K^i_n(h) = q$ for $1 \leq i \leq n$, we call $h$ a \emph{pre-invariant} point of rule $r_i$.  Associated with the $(Y_m, K_n)$ dynamical system, we let $\Omega$ denote the set of invariant points of all types, as well as preimages of the cycle map $\mathcal{K}_n$ from all pre-invariant points.  Further, if $p$ is a strictly invariant point under $K_n$, then all points $a \in Y_m$ such that $\mathcal{K}_n^j(a) = p$, $j \in \mathbb{Z}^+$, are also contained in $\Omega$.

Put $Y'_m = Y_m \setminus \Omega$.  We call the dynamical system $(Y_m, K_n)$ \emph{degenerate} when iteration of $K_n$ eventually maps to an invariant point of any type; it follows that for the dynamical system $(Y_m', \mathcal{K}_n)$ to be well defined, $(Y_m, K_n)$ must be a non-degenerate dynamical system.  Such degenerate systems arise at bifurcation points, and the remainder of this section focuses on the study of non-degenerate systems.  The main result of this section is as follows.

\begin{theorem}\label{thm3}
Let $(Y_m, K_n)$ be a non-degenerate system, with piecewise $n$-rule map $K_n$ satisfying the average contraction condition over $Y_m$.  Then there exists $k \in \mathbb{Z}^+$ such that for all $x \in Y_m'$, the orbit $\mathcal{O}(x)$ converges to a periodic orbit of period $kn$.
\end{theorem}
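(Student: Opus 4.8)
The plan is to pass to the cycle map $\mathcal{K}_n = K_n^n$ on $Y'_m$ and prove that for every $x \in Y'_m$ the $\omega$-limit set of $x$ under $\mathcal{K}_n$ is a single periodic orbit; since orbits stay in a bounded region (Lemma \ref{lem1}), this forces $\mathcal{O}(x)$ to converge to that orbit, and writing $k$ for its period under $\mathcal{K}_n$ gives period $kn$ as in the statement. The first task is structural: to see that $\mathcal{K}_n$ is piecewise affine with only finitely many pieces. As a point $x$ moves along a line $L_i \subset Y_m$, the rank order of the distances in $\mathcal{D}(x)$ to the other $m-1$ lines changes only at the finitely many points of $L_i$ that are equidistant from two of those lines (each such locus is a single point, where $L_i$ meets an angle bisector). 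Hence each piecewise rule is, off a finite subset of $Y_m$, locally a similarity sending one line into a fixed line; composing the $n$ rules of $K_n$ shows $\mathcal{K}_n$ is, off a finite set $\Delta \subseteq \Omega$, locally a composition of $n$ similarities. Intersecting the components of $Y_m \setminus \Delta$ with the finitely many regions on which the full length-$n$ itinerary of $K_n$ is constant yields a finite partition $\mathcal{P} = \{A_1, \dots, A_N\}$ of $Y'_m$ into relatively open sub-intervals of lines, on each of which $\mathcal{K}_n$ is affine with image inside a single line.

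Second, I would record contraction and synchronization. On an atom $A_j$ the points, together with all their intermediate images under $r_1, \dots, r_n$, avoid line-intersection and invariant points by construction, so Lemma \ref{lem6} gives $d(\mathcal{K}_n(x), \mathcal{K}_n(y)) \le C\, d(x,y)$ for all $x,y \in A_j$ with $C = c_1 \cdots c_n < 1$ — the strict inequality being exactly what the average contraction condition provides through Lemma \ref{lem7} and Corollary \ref{cor2}. Each $A_j$ is convex, so for $x, y$ in a common atom the segment $[x,y]$ lies in it and maps affinely onto $[\mathcal{K}_n(x), \mathcal{K}_n(y)]$; by induction one gets the synchronization principle: if $x, y \in Y'_m$ have the same itinerary (i.e.\ $\mathcal{K}_n^t(x)$ and $\mathcal{K}_n^t(y)$ lie in a common atom for every $t \ge 0$), then $d(\mathcal{K}_n^t(x), \mathcal{K}_n^t(y)) \le C^t d(x,y) \to 0$.

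Third, extract periodicity. Fix $x \in Y'_m$, with orbit $x_t = \mathcal{K}_n^t(x)$ eventually trapped in a bounded region $R'$ (Lemma \ref{lem1}) and itinerary $\mathbf{j} = (j_0, j_1, \dots)$ in the finite alphabet $\{1, \dots, N\}$. Because $Y'_m$ omits $\Omega \supseteq \Delta$, the orbit never meets a partition boundary, so $\omega(x)$ is compact and disjoint from the finite set $\overline{\Omega} \cap \overline{R'}$, hence at positive distance $\rho_0$ from it. A pigeonhole and compactness argument on the tails of $\mathbf{j}$ provides, for each $L$, indices $t < s$ with $j_{t+i} = j_{s+i}$ for $0 \le i \le L$, whence by synchronization $d(x_{t+i}, x_{s+i}) \le C^i\,\mathrm{diam}(R')$ on that range; so the orbit is arbitrarily close to being $(s-t)$-periodic on arbitrarily long windows. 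Combining this with the uniform gap $\rho_0$ from the discontinuity set, the intended conclusion is that on a neighborhood of $\omega(x)$ some iterate $\mathcal{K}_n^{p}$ coincides with a single continuous affine branch, which is $C^p$-Lipschitz and maps $\omega(x)$ onto itself; a surjective contraction of a compact set has that set a single point, so $\omega(x) = \{x^*\}$ with $\mathcal{K}_n^p(x^*) = x^*$, making $x^*$ a periodic point of $\mathcal{K}_n$, and $\mathcal{O}(x)$ converges to its $K_n$-orbit.

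I expect the crux to be the last step — turning the long ``almost-periodic'' window coming from pigeonhole into a genuinely periodic $\omega$-limit set, equivalently, ruling out an aperiodic (Cantor-type) attractor, which does occur for general piecewise contractions with more than one branch. Excluding it must use exactly what is special here: the discontinuity locus $\Omega$ of $\mathcal{K}_n$ is finite, the system is non-degenerate so orbits in $Y'_m$ (and hence their $\omega$-limit sets) stay uniformly away from $\Omega$, and every branch of $\mathcal{K}_n$ is strictly — not merely non- — contracting. Making the $\rho_0$-separation argument quantitative enough to force the itinerary of $x$ to be eventually periodic is the delicate part; the remaining ingredients — the similarity bookkeeping and the contraction estimates — are already available from Section \ref{sec:similarity} and the lemmas above.
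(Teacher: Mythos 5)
There is a genuine gap, and you have located it yourself: the final step, upgrading the ``arbitrarily long almost-periodic windows'' produced by pigeonhole and synchronization into an actually periodic $\omega$-limit set, is never carried out. Your $\rho_0$-separation idea as stated does not close it: knowing that $\omega(x)$ stays a fixed distance from the finitely many invariant points does not by itself prevent the itinerary from being aperiodic, and for piecewise contractions with finitely many branches Cantor-type minimal attractors genuinely occur, so some additional structural input specific to this system is mandatory. Your proposal names the need for such an input but does not supply it, so the argument as written does not prove the theorem.

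The ingredient the paper uses, and which your itinerary partition lacks, is that the cut-point set is taken to be \emph{backward invariant}: $\Omega$ is defined to contain not only the invariant and pre-invariant points but all of their preimages under the cycle map $\mathcal{K}_n$. Consequently each interval $I_a \in U_m$ maps into a \emph{single} interval $I_b$ (Lemma \ref{lem2}: if $\mathcal{K}_n[I_a]$ straddled two intervals, $I_a$ would contain a preimage of a boundary point, i.e.\ a point of $\Omega$, contradicting its construction). This turns the dynamics at the level of intervals into a deterministic map on the finitely many intervals an orbit can visit (finiteness comes from boundedness of orbits, Lemma \ref{lem1}, together with Corollary \ref{cor1}, which guarantees preimages diverge so only finitely many intervals meet the relevant bounded region — this also answers the worry, implicit in your construction, that a backward-invariant cut set might be infinite or accumulate). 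Pigeonhole then yields a confining interval $I_c$ with $\mathcal{K}_n^t[I_c] \subset I_c$ (Lemma \ref{lem4}), and on $I_c$ the induced map is a contraction by Lemma \ref{lem6}, hence has a unique fixed point (Lemma \ref{lem5}), which is exactly the periodic point of period $kn$; no $\omega$-limit or almost-periodicity analysis is needed. Your first two steps (piecewise similarity structure, contraction and synchronization on atoms) agree in substance with the paper's Section \ref{sec:similarity} and Lemmas \ref{lem6}--\ref{lem7}, but the Markov-type property of Lemma \ref{lem2} — image of an interval of continuity lies in one interval of continuity — is the missing idea that rules out the aperiodic attractor you correctly identified as the crux.
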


Figure \ref{fig:piecewise} illustrates the kind of dynamics Theorem \ref{thm3} provides, showing the periodic orbit iteration of a $4$-rule map converged to in a space $Y_{5}$.   

\begin{figure}
    \centering
    \includegraphics[scale=.55]{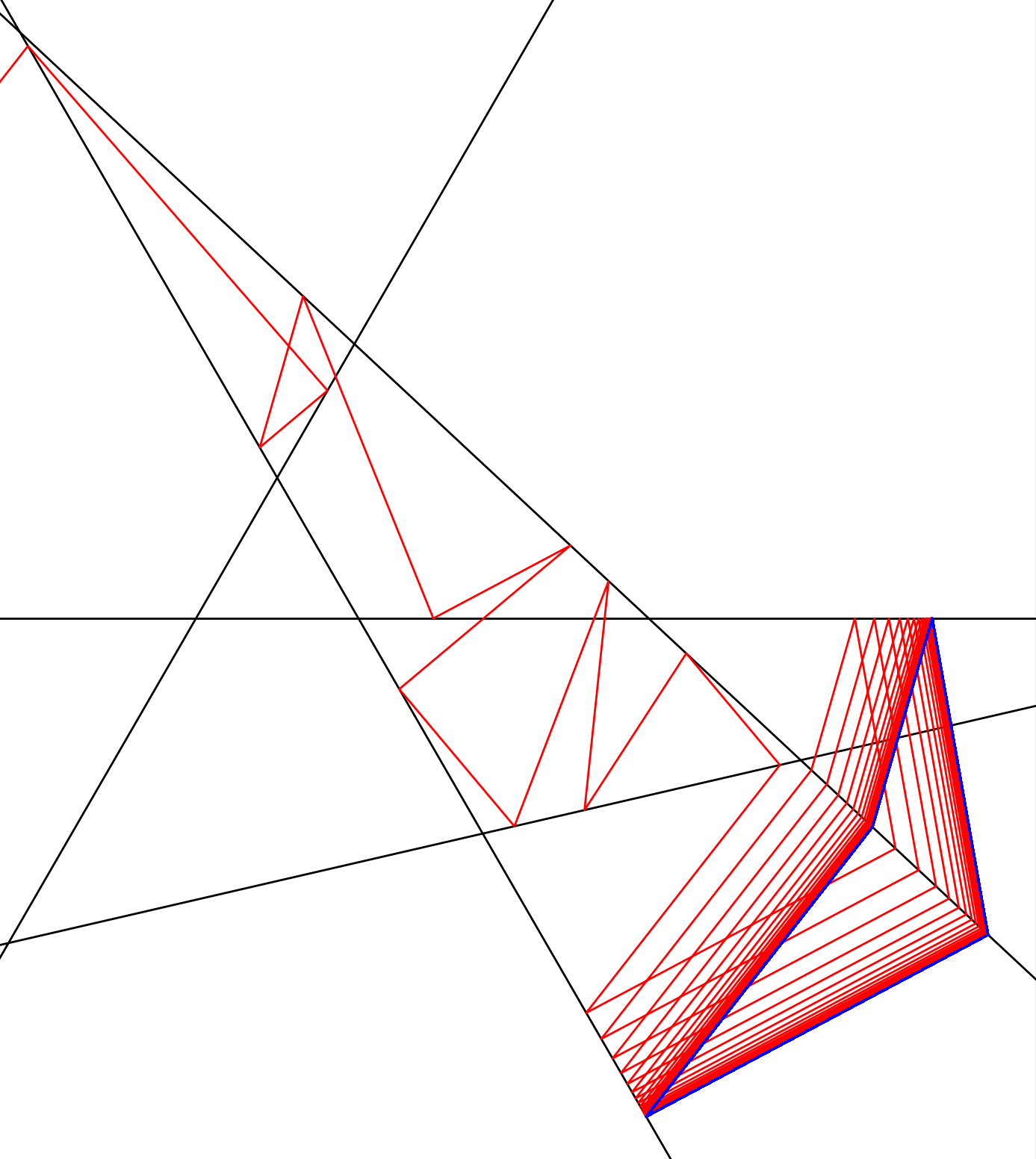}
    \caption{Iteration of a piecewise $4$-rule map in $Y_{5}$, with the orbit converging to a four-cycle.  This was the output of a numerical simulation.}
    \label{fig:piecewise}
\end{figure}

We need some preparatory lemmas to prove Theorem \ref{thm3}.  First, note that as consequence of Corollary \ref{cor1}, if $K_n$ satisfies the average contraction condition, then sequences of points taken from preimages of invariant points diverge in $Y_m$, so $\Omega$ is guaranteed not to be dense in $Y_m$ since the collection of sometimes invariant and strictly invariant points is finite.  If, however, $K_n$ fails to satisfy the average contraction condition then such a guarantee may not be made.

If $K_n$ satisfies the average contraction conditions in $Y_m$, then let $U_m$ denote the set of open intervals $I_a \subset Y_m$ such that the boundary values of each $I_a$ are given by elements in $\Omega$; no element in $\Omega$ is contained within an open interval $I_a$.  Let 
\[\hat{\mathcal{O}}(x) \coloneqq \{x, \mathcal{K}_n(x), \mathcal{K}^2_n(x),...\}\] denote the orbit of $x$ under cycle map $\mathcal{K}_n$.

\begin{lemma}\label{lem2}
For non-degenerate dynamical system $(Y_m, K_n)$ and piecewise $n$-rule map $K_n$ satisfying the average contraction condition over $Y_m$, if $I_a \in U_m$, then there exists an $I_b \in U_m$ such that $\mathcal{K}_n[I_a] \subset I_b$, where $I_a, I_b$ need not be distinct.
\end{lemma}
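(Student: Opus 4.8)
The plan is to reduce the statement to three properties of the cycle map $\mathcal{K}_n$ on $Y_m' = Y_m\setminus\Omega$: that $\mathcal{K}_n$ is continuous on each $I_a\in U_m$, that it sends $I_a$ into a single line of $Y_m$, and that $\mathcal{K}_n(Y_m')\subseteq Y_m'$. Granting these, $\mathcal{K}_n[I_a]$ is a connected subset of a single line of $Y_m$ that does not meet $\Omega$, and any such set lies inside one of the intervals $I_b\in U_m$ (with $I_b=I_a$ permitted); this is exactly the assertion of the lemma.

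First I would isolate the local behavior of a single piecewise rule. By the computation in Section \ref{sec:similarity}, on any sub-arc of a line of $Y_m$ along which the $l$th farthest line is a fixed line $L_j$, a rule with index $l$ acts as a similarity transformation onto $L_j$; in particular it is continuous there and sends that sub-arc into the single line $L_j$. The only points at which a rule $r$ fails to be locally of this form are those where the distance value at index $l$ is non-unique, and by definition these are exactly the invariant points of $r$ — all of which lie in $\Omega$.

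Next I would propagate this through one cycle. Fix $I_a\in U_m$; since $I_a\subseteq Y_m'$ contains no invariant point of $r_1$, the rule $r_1$ acts as a similarity on $I_a$ and $r_1[I_a]$ is a connected subset of a single line. Inductively, suppose $r_{k-1}\circ\cdots\circ r_1[I_a]$ is a connected subset of one line; were it to contain an invariant point $q$ of $r_k$, some $x\in I_a$ would satisfy $K_n^{k-1}(x)=q$ with $q$ invariant for the next rule $r_k$, which makes $x$ a pre-invariant point and hence forces $x\in\Omega$, contradicting $x\in Y_m'$. Thus $r_k$ acts as a similarity on $r_{k-1}\circ\cdots\circ r_1[I_a]$, and $r_k\circ\cdots\circ r_1[I_a]$ is again a connected subset of one line. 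After $n$ steps, $\mathcal{K}_n$ is continuous on $I_a$ and $\mathcal{K}_n[I_a]$ is a connected subset of a single line $L'\subset Y_m$. Finally, $\Omega$ is constructed so as to be backward invariant under $\mathcal{K}_n$ — it adjoins all $\mathcal{K}_n$-preimages of the pre-invariant points and of the strictly invariant points — whence $\mathcal{K}_n(Y_m')\subseteq Y_m'$ and so $\mathcal{K}_n[I_a]$ avoids $\Omega$. Together with the first paragraph this gives $\mathcal{K}_n[I_a]\subseteq I_b$ for some $I_b\in U_m$.

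I expect the main obstacle to be the bookkeeping hidden in the inductive step: one must be certain that $\Omega$ genuinely contains every point of $I_a$ whose first-cycle $K_n$-iterates meet an invariant point, together with all $\mathcal{K}_n$-preimages of such points, so that $\mathcal{K}_n$ restricted to $I_a$ is one fixed composition of similarity-type rules rather than merely piecewise so — this is precisely what makes $\mathcal{K}_n|_{I_a}$ continuous. A minor point to dispose of is that a line intersection point of $Y_m$ may lie on $L'$ without lying in $\Omega$; this is harmless, since $\mathcal{K}_n[I_a]$ need only avoid $\Omega$ and such a point sits in the interior of the relevant $I_b$. Once these are settled, the similarity property from Section \ref{sec:similarity} and the connectedness argument are routine.
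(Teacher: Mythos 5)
Your argument is correct and in substance it is the paper's own: the paper proves the lemma by contradiction, observing that if $\mathcal{K}_n[I_a]$ met two intervals of $U_m$ then $I_a$ would have to contain a $\mathcal{K}_n$-preimage of a boundary point, i.e.\ a point of $\Omega$ --- which is exactly the backward invariance of $\Omega$ together with the continuity of $\mathcal{K}_n$ on $I_a$ that you argue directly. The only difference is presentational: you make explicit the rule-by-rule similarity/continuity step (no invariant point of $r_k$ is met, so each rule projects onto one fixed line) and the resulting connectedness of the image, which the paper's two-line contradiction argument leaves implicit.
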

\begin{proof}
We proceed by contradiction and assume $\mathcal{K}_n[I_a] \subset I_{b} \cup I_{c}$.  By definition, the boundary values of each $I_a \in U_m$ are invariant points or preimages of invariant points under $\mathcal{K}_n$.  It follows that if $\mathcal{K}_n[I_a] \subset I_{b} \cup I_{c}$, then $I_a = I_{d} \cup I_{e}$, as $I_a$ would contain preimages of such boundary values, a contradiction.
\end{proof}

\begin{lemma}\label{lem3}
If $(Y_m, K_n)$ is non-degenerate with $K_n$ satisfying the average contraction condition and $x \in Y'_m$, then $\hat{\mathcal{O}}(x) \subset \bigcup_{i=1}^s I_i$ for finite $s$ and $I_i \in U_m$.
\end{lemma}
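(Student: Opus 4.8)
The plan is to follow $\hat{\mathcal{O}}(x)$ through the intervals of $U_m$ that it occupies. Lemma \ref{lem2} shows these intervals form a chain under $\mathcal{K}_n$, and the average contraction condition, via Lemma \ref{lem1}, confines the orbit to a bounded region inside of which $\Omega$, and hence the relevant part of $U_m$, is finite.

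First I would observe that since $(Y_m, K_n)$ is non-degenerate, the full orbit $\mathcal{O}(x)$ of $x \in Y'_m$ never meets an invariant point of any type, and in fact never meets $\Omega$ at all: if some $\mathcal{K}_n^t(x)$ were a $\mathcal{K}_n$-preimage of a pre-invariant or strictly invariant point, then a later $K_n$-iterate of $x$ would be an invariant point, contradicting non-degeneracy. Hence every point of $\hat{\mathcal{O}}(x)$ lies in some interval of $U_m$; write $x \in I_{a_0}\in U_m$. Applying Lemma \ref{lem2} inductively produces intervals $I_{a_t}\in U_m$ with $\mathcal{K}_n[I_{a_t}]\subset I_{a_{t+1}}$, so $\mathcal{K}_n^t[I_{a_0}]\subset I_{a_t}$ and therefore $\mathcal{K}_n^t(x)\in I_{a_t}$ for every $t$, giving $\hat{\mathcal{O}}(x)\subset\bigcup_{t\ge 0} I_{a_t}$. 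It thus suffices to prove that $\{I_{a_t} : t\ge 0\}$ is finite.

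For that I would invoke Lemma \ref{lem1} (and its proof): the average contraction condition forces iteration of $K_n$ to move, on average, toward the finitely many line intersection points of $Y_m$, so there is a bounded region $R'$ and an index $T$ with $\mathcal{K}_n^t(x)\in R'$ for all $t\ge T$. By Corollary \ref{cor1} every sequence of successive preimages of a noninvariant point diverges in $Y_m$; together with the finiteness of the set of invariant points, this shows $\Omega$ has no accumulation point in the bounded set $R'$, so $\Omega\cap R'$ is finite and hence only finitely many, say $N$, intervals of $U_m$ meet $R'$. For $t\ge T$ the interval $I_{a_t}$ contains $\mathcal{K}_n^t(x)\in R'$, so it is one of those $N$ intervals, while the iterates with $t<T$ lie in at most $T$ further intervals. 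Hence $\hat{\mathcal{O}}(x)$ is covered by at most $T+N$ intervals of $U_m$, which is the claim with $s\le T+N$.

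The main obstacle I anticipate is justifying the discreteness of $\Omega$ inside the trapping region $R'$. The set $\Omega$ contains not only the finitely many invariant and pre-invariant points but all of their iterated $\mathcal{K}_n$-preimages, which a priori might accumulate; ruling this out is precisely the content of Corollary \ref{cor1}, but to apply it rigorously one should check that at each stage the preimage sets are locally finite, which follows from the fact that each $K_n^i$ is, on every line of $Y_m$, piecewise a similarity with only finitely many pieces. A secondary, more cosmetic point is that Lemma \ref{lem1} as stated traps only orbits that begin in $R$; one needs the intended stronger conclusion that an arbitrary orbit eventually enters and remains in $R'$, after which the finite initial segment of $\hat{\mathcal{O}}(x)$ is harmless.
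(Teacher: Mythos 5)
Your proposal is correct and follows essentially the same route as the paper's proof: propagate the orbit through intervals of $U_m$ via Lemma \ref{lem2}, confine it to a bounded region via Lemma \ref{lem1}, and use Corollary \ref{cor1} together with the finiteness of invariant points to conclude that only finitely many intervals of $U_m$ meet that region. Your additional bookkeeping (the $T+N$ count, the remark that non-degeneracy keeps the orbit off $\Omega$ entirely, and the note that Lemma \ref{lem1} is implicitly being used in its "eventually enters and remains" form) only makes explicit what the paper's argument leaves informal.
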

\begin{proof}
We first note that taking $x \in Y_m'$ is, by definition, equivalent to taking $x \in I_a$, for $I_a$ in $U_m$.  For any $(Y_m, K_n)$ dynamical system, there may only be a finite number of invariant points of any type under $K_n$, and by Corollary \ref{cor1}, preimages of $n$-rule maps satisfying the average contraction condition diverge from points in $Y_m$.  It then follows by definition of the set $\Omega$ and corresponding construction of intervals in $U_m$, that for any bounded region $R \subset Y_m$, there may only be a finite number of such intervals $I_a$ in $R$.  Further, by Lemma \ref{lem1}, orbits of $n$-rule maps satisfying the average contraction condition must remain in a bounded region. Finally, by Lemma \ref{lem2}, for every $I_a \in U_m$, $\mathcal{K}_n[I_a] \subset I_b$, and it thus follows that the orbit of $x$ under $\mathcal{K}_n$ is contained in a finite number of intervals.
\end{proof}

We call an interval $I_c \in U_m$ \emph{confining} if there is a $t \in \mathbb{Z}^+$, $t = t(I_c, K_n)$, such that $\mathcal{K}_n^t[I_c] \subset I_c$.  

\begin{lemma}\label{lem4}
If $(Y_m, K_n)$ is a non-degenerate dynamical system with $n$-rule map $K_n$ satisfying the average contraction conditions over $Y_m$, then there exists a confining interval $I_c$ in $Y_m$, and iteration of $\mathcal{K}_n$ over any $x \in Y_m'$ maps into a confining interval in a finite number of iterations.
\end{lemma}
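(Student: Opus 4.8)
\textbf{Proof proposal for Lemma \ref{lem4}.}
The plan is to turn $\mathcal{K}_n$ into a deterministic self-map of the collection $U_m$ and then run a pigeonhole argument using the finiteness supplied by Lemma \ref{lem3}. First I would record that $\mathcal{K}_n$ induces a well-defined map $F\colon U_m \to U_m$. Indeed, for $I_a \in U_m$, Lemma \ref{lem2} gives an interval $I_b \in U_m$ with $\mathcal{K}_n[I_a] \subset I_b$; this $I_b$ is unique, since distinct intervals of $U_m$ are disjoint while $\mathcal{K}_n[I_a]$ is nonempty, so we may set $F(I_a) := I_b$. By induction $\mathcal{K}_n[F^j(I_a)] \subset F^{j+1}(I_a)$, so if $x \in I_a$ then $\mathcal{K}_n^{j}(x) \in F^{j}(I_a)$ for every $j \geq 0$; here non-degeneracy of $(Y_m,K_n)$ is used to guarantee that the forward $\mathcal{K}_n$-orbit of $x$ is a genuine orbit in $Y_m'$ that never meets $\Omega$, so it stays inside intervals of $U_m$.

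Next I would fix $x \in Y_m'$, which by definition means fixing $x$ in some $I_{a_0} \in U_m$, and consider the sequence $I_{a_0}, F(I_{a_0}), F^{2}(I_{a_0}), \dots$ in $U_m$. Since the intervals of $U_m$ partition $Y_m'$ and $\mathcal{K}_n^{j}(x) \in F^{j}(I_{a_0})$, each $F^{j}(I_{a_0})$ is precisely the interval of $U_m$ containing $\mathcal{K}_n^{j}(x)$; by Lemma \ref{lem3} the orbit $\hat{\mathcal{O}}(x)$ meets only finitely many, say $s$, of these intervals, so the sequence $\{F^{j}(I_{a_0})\}_{j\ge 0}$ takes at most $s$ distinct values. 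Among its first $s+1$ terms two must coincide, say $F^{N}(I_{a_0}) = F^{N+p}(I_{a_0})$ with $p \geq 1$, and since $F$ is deterministic the sequence is periodic with period $p$ from index $N$ onward. In particular $\mathcal{K}_n^{p}[F^{N}(I_{a_0})] \subset F^{N+p}(I_{a_0}) = F^{N}(I_{a_0})$, so $I_c := F^{N}(I_{a_0})$ is a confining interval, and $\mathcal{K}_n^{N}(x) \in I_c$; thus iteration of $\mathcal{K}_n$ carries $x$ into a confining interval after $N$ steps. Finally, since $\Omega$ is not dense in $Y_m$ (as noted just before Lemma \ref{lem2}), $Y_m'$ is nonempty, so applying the above to any $x \in Y_m'$ also yields the existence of a confining interval in $Y_m$, completing the argument.

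This reduction is essentially bookkeeping layered on top of Lemmas \ref{lem1}--\ref{lem3}: the dynamically substantive facts (orbits stay bounded, only finitely many intervals are visited, $\mathcal{K}_n$ sends an interval of $U_m$ into a single interval of $U_m$ rather than splitting it) are already in hand. The only points requiring care are the well-definedness of the induced interval map $F$ and the compatibility relation $\mathcal{K}_n^{j}(x) \in F^{j}(I_{a_0})$, and it is exactly at these points that non-degeneracy and the no-splitting conclusion of Lemma \ref{lem2} are invoked; I do not anticipate a genuine obstacle beyond stating those two points precisely.
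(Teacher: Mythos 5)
Your proposal is correct and follows essentially the same route as the paper: pigeonhole on the finitely many intervals visited (Lemma \ref{lem3}), with Lemma \ref{lem2} guaranteeing that a revisited interval is confining. You simply make explicit the induced map $F$ on $U_m$ and the containment $\mathcal{K}_n^{p}[F^{N}(I_{a_0})]\subset F^{N}(I_{a_0})$, which the paper's terser pigeonhole argument leaves implicit.
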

\begin{proof}
By Lemma \ref{lem3}, the orbit of any $x \in Y_m'$ under $\mathcal{K}_n$ is restricted to a finite number of intervals.  Thus, by way of the pigeon hole principle, iteration of $\mathcal{K}_n$ is forced to map to an interval it has already visited in a finite number of iterations: a confining interval.  And because the orbit is restricted to a finite number of intervals, it must map into a confining interval after a finite number of iterations. 
\end{proof}

\begin{definition}\label{def1}
Let $I_c \in U_m$ be a confining interval in $Y_m$, and let $\hat{\mathcal{K}}_n:I_c \rightarrow I_c$ be the \emph{induced map} of $\mathcal{K}_n$ over the interval of continuity $I_c$, defined so that if $x \in I_c$ and $\mathcal{K}_n^{k}(x) \in I_c$ for minimal $k$, we put $\hat{\mathcal{K}}_n(x) = \mathcal{K}^{k}_n(x) = K^{kn}_n(x)$.
\end{definition}

\begin{lemma}\label{lem5}
If $(Y_m, K_n)$ is a non-degenerate dynamical system with $n$-rule map $K_n$ satisfying the average contraction condition over $Y_m$, and let $I_c$ be a confining interval in $Y_m$.  Then the induced map $\hat{\mathcal{K}}_n$ has a unique fixed point in $I_c$.
\end{lemma}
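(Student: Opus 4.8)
The plan is to show that the induced map $\hat{\mathcal{K}}_n$ is a contraction of $I_c$ and then invoke the contraction mapping theorem. First I would identify $\hat{\mathcal{K}}_n$ with a fixed composition of rules: by Lemma \ref{lem2}, $\mathcal{K}_n$ carries each interval of $U_m$ into a single interval of $U_m$, so the successive images $I_c, \mathcal{K}_n[I_c], \mathcal{K}_n^2[I_c],\dots$ each lie inside one interval of $U_m$; since distinct intervals of $U_m$ are disjoint and $I_c$ is confining, the least $k$ with $\mathcal{K}_n^k[I_c]\subset I_c$ is a common first-return time for \emph{every} point of $I_c$. Consequently $\hat{\mathcal{K}}_n=\mathcal{K}_n^{k}=K_n^{kn}$ on all of $I_c$; that is, $\hat{\mathcal{K}}_n$ is the restriction to $I_c$ of a fixed composition of $kn$ rules, and each set $K_n^{i}[I_c]$, $0\le i\le kn$, is a connected subinterval of a line of $Y_m$.

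Next I would verify that none of these intermediate intervals meets a line intersection point or an invariant point. Since $I_c\cap\Omega=\varnothing$ and, by construction, the boundary points of $I_c$ lie in $\Omega$ while $\Omega$ contains every backward $K_n$-iterate of an invariant point, a line intersection point or invariant point inside some $K_n^i[I_c]$ would pull back along the rules (which act as similarities wherever no invariant point is crossed, and in particular are continuous on the intervals in question) to a point of $\Omega$ lying in $I_c$, a contradiction. Hence for all $x,y\in I_c$ and all $0\le i\le kn$ the segment $[K_n^i(x),K_n^i(y)]\subset K_n^i[I_c]$ contains no line intersection point and no invariant point, so Lemma \ref{lem6} applies to each of the $k$ successive cycles making up $\hat{\mathcal{K}}_n$. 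Chaining the resulting estimates gives $d(\hat{\mathcal{K}}_n(x),\hat{\mathcal{K}}_n(y))\le C'\,d(x,y)$ for a single constant $C'\in[0,1)$, so $\hat{\mathcal{K}}_n$ is a contraction on $I_c$.

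Being a contraction, $\hat{\mathcal{K}}_n$ is uniformly continuous and extends to a contraction of the closed, bounded, hence complete interval $\overline{I_c}$, which it maps into itself since $\hat{\mathcal{K}}_n[I_c]\subset I_c$. The contraction mapping theorem then produces a unique fixed point $x^{*}\in\overline{I_c}$. It remains to exclude $x^{*}\in\partial I_c$: the endpoints of $I_c$ lie in $\Omega$, and by Corollary \ref{cor1} backward orbits of invariant points diverge, so no endpoint of $I_c$ can be a periodic point of $K_n$; were $x^{*}$ an endpoint it would be exactly such a periodic point lying in $\Omega$, and so its forward orbit would reach an invariant point, contradicting non-degeneracy of $(Y_m,K_n)$. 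Therefore $x^{*}\in I_c$, and by the contraction estimate it is the unique fixed point of $\hat{\mathcal{K}}_n$ in $I_c$.

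I expect the main obstacle to be the middle step: making rigorous that the intermediate images $K_n^i[I_c]$, $0\le i\le kn$, avoid all line intersection points and invariant points — i.e.\ correctly exploiting the definitions of $\Omega$ and of the intervals of $U_m$ (in particular the status of line intersection points relative to $\Omega$) — so that Lemma \ref{lem6} can be legitimately applied $k$ times in succession. The secondary delicate point is the final one, showing that the fixed point produced on $\overline{I_c}$ does not escape to $\partial I_c$; this is where the non-degeneracy hypothesis is essential.
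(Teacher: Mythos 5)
Your proposal is correct and follows essentially the same route as the paper: Lemma \ref{lem6} (via the average contraction condition) makes $\hat{\mathcal{K}}_n$ a contraction on the confining interval, Lemma \ref{lem2} together with non-degeneracy keeps its image inside $I_c$, and a contraction-mapping/Cauchy-sequence argument gives the unique fixed point. You merely fill in details the paper leaves implicit (the common first-return time on $I_c$, passing to $\overline{I_c}$ for completeness, excluding a boundary fixed point); the one soft spot is that your pull-back argument covers invariant points (which do land in $\Omega$) but not line intersection points that happen to be invariant under no rule, a hypothesis of Lemma \ref{lem6} that the paper's own proof likewise leaves unverified.
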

\begin{proof}
By hypothesis, $K_n$ satisfies the average contraction condition over $Y_m$, so $\hat{\mathcal{K}}_n$ is a contraction mapping over confining interval $I_c$ as consequence of Lemma \ref{lem6}.  Further, we take the system $(Y_m, K_n)$ to be non-degenerate, so by Lemma \ref{lem2}, $\hat{\mathcal{K}}_n[I_c] \subset I_c$ (strict subset).  As such, for any $x \in I_c$, the sequence of points $x, \hat{\mathcal{K}}_n(x), \hat{\mathcal{K}}^2_n(x), ...$ is a Cauchy sequence, and must converge to a unique point in the interval of continuity $I_c$.  It follows that there is a point $x^* \in I_c$ such that $\hat{\mathcal{K}}_n(x^*) = x^*$.
\end{proof}

We now prove Theorem \ref{thm3}.

\begin{proof}[Proof of Theorem \ref{thm3}]
By hypothesis, $(Y_m, K_n)$ is a non-degenerate dynamical system, $K_n$ satisfies the average contraction condition, and we take $x \in Y_m'$ so iteration of $K_n$ over $x$ does not map to an invariant point of any type.  It then follows as a consequence of Lemma \ref{lem4} that iteration of $K_n$ over $x \in Y_m'$ maps into a confining interval $I_c \in U_m$ in a finite number of iterations.  And by consequence of Lemma \ref{lem5} and Definition \ref{def1}, iteration of $K_n$ in a confining interval must converge to a periodic orbit of period $kn$, $k \in \mathbb{Z}^+$.
\end{proof}

We remark that for particular periodic orbits generated by an $n$-rule map in $Y_m$, we cannot claim that the corresponding basin of attraction is all of $Y'$, as the periodic orbit is also dependent on initial condition $x_0 \in Y'$.  Indeed, work established in \cite{nogueira} for example, which concerns piecewise contractions of the interval, motivates questions regarding upper bounds for the number of distinct periodic orbits a fixed $(Y_m, K_n)$ dynamical system can admit.  One other question that arises from our analysis is whether there are conditions that can be used to tell whether a dynamical system $(Y_m, K_m)$ is degenerate or not.

Software that can be used to simulate both types of $n$-rule maps is publicly available at \cite{Everett_N-Rule-Maps_2020}.

\bibliographystyle{abbrv}
\bibliography{references}
\end{document}